\setlist[itemize]{leftmargin=6mm}
\newcommand{\C}{\mathbb C}
\newcommand{\Pj}{\mathbb{P}}
\newcommand{\R}{\mathbb R}
\newcommand{\Q}{\mathbb Q}
\newcommand{\Z}{\mathbb Z}
\DeclareMathOperator{\codim}{codim}
\DeclareMathOperator{\Cl}{Cl}
\newcommand{\cO}{\mathcal{O}}
\DeclareMathOperator{\rk}{rk}
\DeclareMathOperator{\Hom}{Hom}
\DeclareMathOperator{\Pic}{Pic}
\newcommand\iso{\cong}
\newcommand{\N}{\mathbb{N}}
\newcommand{\res}{\operatorname{res}}
\newtheorem{thm}{Theorem}[section]
\newtheorem{prop}[thm]{Proposition}
\newtheorem{lma}[thm]{Lemma}
\newtheorem{cor}[thm]{Corollary}
\newtheorem{dfn}[thm]{Definition}
\theoremstyle{remark}
\newtheorem{rmkk}[thm]{Remark}
\newtheorem{exe}[thm]{Example}
\newenvironment{rmk}{\begin{rmkk}\rm}{\qee\end{rmkk}}
\newenvironment{ex}{\begin{exe}\rm}{\qee\end{exe}}
\newcommand{\qee}{\mbox{\hspace{0.2mm}}\hfill$\triangle$}
\subjclass{14C22, 14J70, 14M25}
\keywords{Noether-Lefschtez loci, Picard rank, toric varieties}
\title[ Noether-Lefschetz components in toric varieties]{An asymptotic description of the Noether-Lefschetz  \\[3pt] components in toric varieties}
\author{Ugo Bruzzo}
\address{Ugo Bruzzo, SISSA  (International School for Advanced Studies), Via Bonomea 265, 34136 Trieste, Italy; INFN (Istituto Nazionale di Fisica Nucleare), Sezione di Trieste, Trieste, Italy; IGAP (Institute for Geometry and Physics), Trieste, Italy }
\email{bruzzo@sissa.it}
\author{William D. Montoya}
\address{William D. Montoya, Instituto de Matemática, Estatística e Computação Científica, Universidade Estadual de Campinas (UNICAMP), Rua Sérgio Buarque de Holanda 651, 13083-859, Campinas, SP, Brazil}
\email{wmontoya@ime.unicamp.br}
\begin{document}

\begin{abstract}We extend the definition of Noether-Leschetz components 
 to quasi-smooth hypersurfaces in an odd dimensional projective  simplicial toric variety $\Pj_{\Sigma}^{2k+1},$ and   prove that asymptotically  the components whose codimension is upper bounded   by a suitable effective constant correspond to hypersurfaces with which one can associate a small $\phi$-degree $k$-dimensional subvariety; the  $\phi$-degree is a generalization of the notion of degree via a group homomorphism $\phi\in \Hom_{\Z}^+(\Cl(\Pj^{2k+1}_{\Sigma}),\Q)$ who plays an important role in  this work.  We conjecture and prove in some cases that this subvariety is contained in the hypersurface. As a corollary we get an asymptotic characterization  of the components with small codimension, generalizing  Otwinowska's work for $\Pj_{\Sigma}^{2k+1}=\Pj^{2k+1}$ and Green and Voisin's for $\Pj_{\Sigma}^{2k+1}=\Pj^3$. Some tools     developed in this paper are a generalization of Green's theorem for simplicial toric varieties, and an extension of the notion of artinian Gorenstein ideal for the  Cox ring of a toric variety.
  \end{abstract}

\maketitle

\setcounter{tocdepth}{1}
\tableofcontents

\section{Introduction}

The classical Noether-Lefschetz theorem states that a very general surface $X$ in $\Pj^3$ of degree $d\geq 4$ has Picard number $1$. In recent years generalizations have been proved using Hodge theory for simplicial projective toric threefolds  satisfying an explicit combinatorial condition   \cite{BruzzoGrassi2012},  and more generally by  Ravindra and Srinivas for normal projective threefolds using a purely algebraic approach  \cite{RavindraSrinivas2009} (see also \cite{BruGraLop}).

The Noether-Lefschetz locus is the subscheme of the (hyper)surface parameter space where the Picard number is greater than the Picard number of the ambient variety. Green  and Voisin  proved that if $NL_d$ is the Noether-Lefschetz  locus
for degree $d$ surfaces in $\Pj^3$,  with $d\ge 4$, the codimension of  every component of $NL_d$ is bounded from below by $d-3$, with equality exactly for the components formed by surfaces containing a line.  Otwinowska gave an asymptotic generalization of Green and Voisin's results to hypersurfaces in an odd dimensional projective space $\Pj^{2k+1}$  \cite{Otwinowska2003}.

{In \cite{BruzzoGrassi2018} (see also \cite{LanzaMartino}) it was  proved  that for simplicial projective toric threefolds the codimension of the Noether-Lefschetz components is also bounded from below.
There it was also proved that components corresponding to     surfaces containing a ``line,''  defined as a  curve which is minimal in a suitable sense, realize the lower bound. However  
the question whether these are exactly the components of smallest condimension  was left open. }

 The purpose of the present  paper is to extend and generalize Otwinowska's ideas to odd dimensional simplicial projective toric varieties.
 In section 2 we present   generalizations of Green's restriction theorem    and of a theorem of Macaulay
\cite{Green89},  while in section 3 we introduce a generalization of the notion of Gorenstein ideal, which we call a Cox-Gorestein ideal; these will be the key tools in the proof of our main result.
Section 4  is   more technical; there we introduce the notion of $\phi$-degree and prove some applications of the toric Green theorem to Cox-Gorenstein ideals.
In section 5 using Hodge theory
we explicitly construct  the tangent space at   points in the Noether-Lefschetz loci, which turns out to be a graded part of a Cox-Gorenstein ideal. In section 6 using all the machinery so far developed we prove our main result.

We shall consider a   projective simplicial toric variety $\Pj^{2k+1}_{\Sigma}$  {of dimension $2k+1$},  whose
fan is $\Sigma$, and   an ample line bundle  $L$ on $\Pj^{2k+1}_{\Sigma}$, 
with $\deg L= \beta \in \Pic(\Pj^{2k+1}_{\Sigma})$ satisfying $\phi(\beta)=n$  for some $n\ge 0$, where   $\phi\in \Hom_{\Z}^+(\operatorname{Cl}(\Pj^{2k+1}_{\Sigma}),\Q)$ 
	 and  $\eta$ is a class with $\phi(\eta)=1$; here $\Hom_{\Z}^+(\operatorname{Cl}(\Pj^{2k+1}_{\Sigma}),\Q)$ is the cone
	 of elements in $\Hom_{\Z}(\operatorname{Cl}(\Pj^{2k+1}_{\Sigma}),\Q)$ that are nonnegative on the effective classes in
	 $\operatorname{Cl}(\Pj^{2k+1}_{\Sigma})$. 
     We will consider a section
     $f\in  H^0(\cO_{\Pj^{2k+1}_{\Sigma}}(\beta)) $   such that $X_f=\{f=0\}$ is a quasi-smooth hypersurface.\footnote{Heuristically this means that $X_f$ has only singularities inherited from the ambient space, or more precisely, regarding $\Pj^{2k+1}_{\Sigma}$ as a smooth orbifold,
that $X_f$ is a smooth suborbifold, see e.g.~\cite{BatyrevCox}.}
Let $\mathcal{U}_{\beta}\subset \Pj(H^0(\cO_{\Pj^{2k+1}_{\Sigma}}(\beta))) $ be the open subset parameterizing the quasi-smooth hypersurfaces and let $\pi: \mathcal{X}_{\beta}\to  \mathcal{U}_{\beta}$ be its tautological family.
Let $H^{2k}_{\Q}$ be the local system $R^{2k}\pi_{\star}\Q$  and let $\mathcal{H}^{2k}$ be the locally free sheaf $H^{2k}_{\Q}\otimes \cO_{\mathcal{U}_{\beta}}$ over $\mathcal{U}_{\beta}$.

Let $0\neq \lambda_f\in H^{k,k}(X_f,\Q)/i^*(H^{k,k}(\Pj^{2k+1}_{\Sigma},\Q)$ and let $U$ be a contractible open subset around $f$, so that $\mathcal{H}^{2k}(U)$ is constant.
Finally, Let $\lambda\in \mathcal{H}^{2k}(U)$ be the section defined by $\lambda_f$ and let $\bar{\lambda}$ be its image in $(\mathcal{H}^{2k}/F^k\mathcal{H}^{2k})(U)$, where $ F^k\mathcal{H}^{2k} =\mathcal{H}^{2k,0}\oplus \mathcal{H}^{2k-1,1}\oplus \dots \oplus \mathcal{H}^{k,k}$.

\begin{dfn}[Local Noether-Lefschetz locus]\label{localNL} $NL^{k,\beta}_{\lambda,U}:=\{g\in U \mid \bar{\lambda}_{g}=0\}$. \end{dfn}
 
When the choice of $\beta$ and $k$ is clear from  the context we will denote the Noether-Lefschtez locus simply by $NL_{\lambda,U}$.

\begin{thm}$T_{[f]}\big(NL_{\lambda,U}\big)\cong E(f)^{\beta}$, where 
$$E(f)=\Bigl\{K\in S^{\bullet} \mid  \sum_{i=1}^b \lambda_i \int_{\operatorname{Tub}\gamma_i}  \frac{KR\Omega_0}{f^{k+1}}=0 \;for\;all\; R\in S^{N-\bullet}\Bigr\} ,$$
 $N=(k+1)\beta-\beta_0$ and $\operatorname{Tub}(-)$ is the adjoint of the residue map. Moreover, $E(f)$ contains $J(f)$, the Jacobian ideal of $f$.
\end{thm}

The  $\phi$-degree   for $\phi\in \operatorname{Hom}^+_\Z(\operatorname{Cl}(\Pj^{2k+1}_{\Sigma}),\Q)$ of a subvariety  in $\Pj^{2k+1}_{\Sigma}$ is defined in Subsection \ref{phidegree} via elimination of variables for $I_V$;   for $\Pj^r$ it coincides with the classical notion of degree.
We shall prove the following preliminary result.

\begin{prop}For every positive 
$\epsilon>0$ there is $ \delta>0$ such that for every $n\geq \frac{1}{\delta}$ and $d\in [1,n\delta]$,  if $\codim NL^{}_{\lambda,U}\leq d\frac{n^k}{k!}$ where $\phi(\beta)=n$,  then  for every element of $NL^{}_{\lambda,U}$  there is  a $k$-dimensional subvariety $V$ whose $\phi$-degree is less than or equal to $(1+\epsilon)d$.
\end{prop}

 \begin{dfn}Let $I'$ be an ideal in the Cox ring $S$ of $\Pj_{\Sigma}^{2k+1}$, 
and $\Lambda'\in (S^{N'})^\vee$   a linear functional   that  induces an isomorphism $(S/I')^{N'}\simeq \C$.
We say that $\alpha$ is regular for $I'$ if $\Lambda'$ defines an inclusion $(S/I')^{\alpha} \hookrightarrow \left((S/I')^{N'-\alpha}\right)^{\vee}$ and we say that $\alpha$ is biregular for $I'$ if $\Lambda'$ defines an isomorphism.
\end{dfn}
An  ideal for which  the  study of  regular classes is nontrivial  is the toric Jacobian ideal $J_0(f)$ (see equation \eqref{ToricJacobianEq}).

In the previous Proposition, let $V$ be given by the vanishing of $J=(F_0,\dots,F_k:F)\subset E(f)$,
let $Z$ be an irreducible $k$-dimensional subvariety of $V$ and let $Z'\subset V$ be a smallest subvariety such that $I_V=I_Z\cap I_{Z'}$. Set $\deg_{\phi}Z:=\phi(\kappa)$ and $\deg_{\phi}Z':=\phi(\kappa')$. 

The following is  our main result.

\begin{thm} For every positive 
$\epsilon$ there is  positive $ \delta$ such that for every $n\geq \frac{1}{\delta}$ and $d\in [1,n\delta]$, if:
\begin{itemize}
    \item $\sum_{i=0}^k \deg (F_i)$ is a regular class for $(F_0,\dots, F_{2k+1})$ where $F_{k+1},\dots F_{2k+1}$  are $k$-generators of $J_0(f)$;
    \item  $\kappa+2\kappa'$ is a biregular class for every pair $(Z,Z')$ where $Z$ is an irreducible subvariety of $V$ and $Z'$ is the smallest subvariety $Z'\subset V$ such that $I_V=I_Z\cap I_{Z'}$;
    \item $\codim NL^{}_{\lambda,U}\leq d\frac{n^k}{k!}$ where $\phi(\beta)=n$,
    
\end{itemize}
 then   every element of $g\in NL^{}_{\lambda,U}$ such that $E(g)^{\leq   \upsilon}\iso E(f)^{\leq \upsilon}$ {\it contains} a $k$-dimensional subvariety  whose $\phi$-degree is less than or equal to $(1+\epsilon)d$. 
\end{thm}

\begin{cor}  For every positive 
$\epsilon$ there is  positive $ \delta$ such that for every $n\geq \frac{1}{\delta}$ and $d\in [1,n\delta]$,  if $\Pj^{2k+1}_{\Sigma}$ has Picard rank 1 and $\codim NL^{}_{\lambda,U}\leq d\frac{n^k}{k!}$ where $\phi(\beta)=n$,  then  every element of $NL^{}_{\lambda,U}$ contains a $k$-dimensional subvariety whose $\phi$-degree is less than or equal to $(1+\epsilon)d$.
 \end{cor}

\smallskip
\textbf{Acknowledgements.}  This research was partly supported by the  PRIN project 2022BTA242  ``Geometry of algebraic structures: moduli, invariants, deformations,''
by GNSAGA-INdAM and by FAPESP through the grants  2017/22091-9, 2019/23499-7 and 2023/01360-2.

\bigskip
    \section{Toric Green's and Macaulay's theorems}

After fixing a positive integer $n$, any other   positive integer $c$ can be written in the form $$c=\binom{k_n}{n}+\dots +\binom{k_{1}}{1},$$ 
with  $k_n>k_{n-1}>\dots >  k_{1}\geq  0$,  see \cite{Green89}. This is called the $n$-th Macaulay decomposition of $c$.  
One adopts the convention that $\binom{k}{\ell} =0 $ if $k<\ell$ so that
one can also write
 $$c=\binom{k_n}{n}+\dots +\binom{k_{\delta}}{\delta},$$
where $\delta = \min \{m \,\vert\,k_m\ge m\}$.
 
Let $c$ be the codimension of a linear subsystem $W\subseteq H^0(\Pj^r,\cO_{\Pj^r}(n))$,  and let  $W_H\subseteq H^0(\cO_H(n))$ be the restriction of $W$  to a general hyperplane $H$ of codimension $c_H$. Then the classical restriction theorem says that  
 \begin{equation}\label{classicrestr}c_H\leq c_{<n>}, \end{equation}
where
$$ c_{<n>}:=\binom{k_n-1}{n}+\cdots +\binom{k_{1}-1}{1}.$$

We note two elementary properties of the function $\psi: c\mapsto c_{<n>}$:

\begin{enumerate}
\item[(A)]\label{A} If $c'\leq c$, then $c'_{<n>}\leq c_{<n>}$, i.e., the map $\psi$ is non-decreasing;
\item[(B)] if $k_{\delta}>\delta$ then $(c-1)_{<n>}<c_{<n>}$,  i.e., the map $\psi$ is increasing {when $k_{\delta}>\delta$.}
\end{enumerate}

{We generalize the inequality \eqref{classicrestr} to a simplicial  projective toric variety $\Pj_{\Sigma}$ using induction on  degree and dimension. 
Given a divisor $D$ in $\Pj_{\Sigma}$ and a linear system $W\subseteq H^0(\cO_{\Pj_{\Sigma}}(D))$,
we will denote by $c_D$ the codimension of $W_D$ in $H^0(\cO_D(D))$. The next two lemmas are a preparation
for the proof of the restriction theorem  (Theorem \ref{restriction thm}).}

Before we start let us introduce some definitions.
Let $\mathcal{L} = \langle x_1, . . . , x_m\rangle $  be the $\C$-linear subspace generated by the classes of the variables in $S$,
 the Cox ring of  $\Pj_{\Sigma}$.  Note that $\mathcal{L}$ has a natural  
grading $\mathcal L = \oplus_{i=1}^s \mathcal L_i$ induced by the  grading of  $S$.

\begin{dfn}\label{linear section}   We call 
any element $l\in \mathcal{L}_i$ for some $i\in \{1,\dots, s\}$ a linear section, and its corresponding  divisor $D$ a linear divisor. We say that a divisor is $\phi$-linear if $\phi([D])=1$ for a $\phi\in \operatorname{Hom}_\Z^+ (\operatorname{Cl}(\Pj^{2k+1}_{\Sigma}),\Q)$. 
    \end{dfn}

\begin{prop}\label{lineartoric} If  $l=\sum_{i=1}^k a_i x_i$ ($a_1\neq 0$) is linear section, its zero locus  is a simplicial toric variety. Moreover $\mathbb{V}(l)\simeq \mathbb{V}(x_1)$.
\end{prop}

\begin{proof} Let $l=\sum_{i=1}^k a_i x_i$ ($a_1\neq 0$) be a linear section and let $\psi$ be the graded automorphism of $S$ defined as $$(x_1,\dots x_k, x_{k+1},\dots x_m)\mapsto (\sum_{i=1}^k a_i x_i, x_2, \dots, x_m)$$ which   is clearly an isomorphims of graded $\C$-algebras. By Theorem 4.2 in \cite{Cox1993} there exists an automorphism $\tilde{\psi}$ inducing an isomorphism  $\mathbb{V}(l)\xrightarrow[\tilde{\psi}]{\sim}\mathbb{V}(x_1) $ and by Proposition 3.2.7 in \cite{CoxLittleSchenck}, $\mathbb{V}(x_1) $ is a simplicial toric variety.  
\end{proof}
\begin{rmk} The above result for $\Pj^r$ says that every  hyperplane  is isomorphic to $\Pj^{r-1}$.
     \end{rmk}

\begin{lma}  Let $D$ be  a general linear divisor in $\Pj_{\Sigma}$,
$W\subseteq H^0(\cO_{\Pj_{\Sigma}}(D))$   a linear subsystem,    and let $W_{D}\subseteq H^0(D,\cO_{\Pj_{\Sigma}}(D)) $ be the restriction of $W$ to $D$. Then 
$$c_D\leq c_{<1>}=\codim(W,H^0(\cO_{\Pj_{\Sigma}}(D)))-1.$$  
\label{L1}\end{lma}

\begin{proof} 
Taking cohomology  in the fundamental short exact sequence of the divisor $D$ and as every simplicial toric variety has zero irregularity,  $H^1(\cO_{\Pj_{\Sigma}})=0$,  we obtain
$$0\to H^0(\cO_{\Pj_{\Sigma}})\to H^0(\cO_{\Pj_{\Sigma}}(D))\to H^0(\cO_D(D))\to0  ,  $$
so that 
\begin{equation}\label{giovanni}
h^0(\cO_{\Pj_{\Sigma}}(D))=h^0(\cO_{\Pj_{\Sigma}})+h^0(\cO_D(D))=1+h^0(\cO_D(D)).
\end{equation}

Denoting by ${\rm pr}$ the projection $W\xrightarrow{{\rm pr}} W_D$ one has
\begin{equation}\label{andrea}
\dim W=\dim\ker {\rm pr}+\dim W_D.  
\end{equation}
so that subtracting \eqref{andrea} from \eqref{giovanni} we have  
$$\codim W=\codim W_D+1-\dim\ker {\rm pr}.$$
Now, if $s_D$ is a section in $H^0(\cO_{\Pj_{\Sigma}}(D))$ such that $D$ is the zero locus of $s_D$,  then 

$$\ker r=\{w\in W \mid {\rm div}_0(w)=\lambda D ,\ \  \lambda\in \C  \}, $$
and taking $s_D\notin W$, {$D$ is general,}  we conclude that $\ker {\rm pr}=\{0\}$.
\end{proof}

\begin{lma} Let  $D$ be  a linear divisor on $\Pj^1,$
let  $W\subset H^0(\cO_{\Pj^1}(\beta))$  be a linear subsystem with $\dim W\geq 1$, and let $W_D\subset H^0(\cO_{D}(\beta))$ be its restriction. Then $c_D=0$ and moreover
$$c_D\leq c_{<n>} .$$
\label{L2}\end{lma}

\begin{proof} Since $D$ is a linear divisor $\operatorname{dim}W_D= \operatorname{dim} H^0(\cO_D(\beta))\simeq \C $, i.e., $c_D=0$. 
\end{proof} 

The previous two Lemmas are  the base cases for the double induction in the proof of the following Theorem.

 \begin{thm}[Toric Green theorem]\label{restriction thm} Let ${\Pj_{\Sigma}}$ be  a projective simplicial toric variety and fix a element $\phi\in \operatorname{Hom}_{\Z}^+(\operatorname{Cl}(\Sigma), \Q)$. Let $W\subseteq H^0({\Pj_{\Sigma}},\cO_{\Pj_{\Sigma}}(\beta))$, be a linear subsystem with $\beta$ an ample   Cartier class such that $\phi(\beta)=n\in\Z$, and assume that $D$   is a  $\phi$-linear general divisor;  let $W_D\subseteq H^0(D,\cO_D(\beta))$ be its restriction to $D$.  Then 
$$c_D\leq c_{<n>}.$$   
\end{thm}

\begin{proof} Let $l_n,\dots ,l_{\delta}$ be the coefficients of the $n$-th Macaulay decomposition of $c_D$. The inequality in the statement  is equivalent to 
$$\binom{l_n+1}{n}+\binom{l_{n-1}+1}{n-1}+\dots +\binom{l_{\delta}+1}{\delta}<c. $$
By contradiction, and recalling that $\binom{l+1}{n}=\binom{l}{n}+\binom{l}{n-1}$, we have
$$c\leq \binom{l_n}{n}+\binom{l_n}{n-1}+\dots +\binom{l_{\delta}}{\delta}+\binom{l_{\delta}}{\delta-1}$$
or equivalently 
\begin{equation} \label{3}
c-c_D\leq \binom{l_n}{n-1}+\dots +\binom{l_{\delta}}{\delta-1}.
\end{equation}
From   the  exact sequence  
$$0\to W(-D)\to W\to W_D\to 0$$
one has
\begin{equation}\label{4}
    \dim W=\dim W_D+\dim W (-D).
\end{equation}
 One has $H^1(\cO_{\Pj_{\Sigma}}(\beta-\eta))=0$ by Mustat\v a vanishing theorem (Corollary 2.5 in \cite{mustata}), so that
$$0\to H^0(\cO_{\Pj_{\Sigma}}(\beta-\eta)\to H^0(\cO_{\Pj_{\Sigma}}(\beta))\to H^0(\cO_D(\beta))\to  0$$
and thus

\begin{equation}
h^0(\cO_{\Pj_{\Sigma}}(\beta))=h^0(\cO_{\Pj_{\Sigma}}(\beta-\eta)+h^0(\cO_D(\beta)).    \label{5}
\end{equation}
Then \eqref{4} minus \eqref{5} yields
$$c=c_D+\codim W(-D).$$

Taking $D'$ a general linear divisor of the toric variety $D$, we are within the same assumptions of the theorem on $D$ as the ambient.

Now, we have the short exact sequence
$$0\to W_{D}(-(D\cap D'))\to W_D\to {W_D}_{|D'}\to 0$$
which gives  
$$ c_D= \codim {W_D}_{|D'}+\codim  W_{D}(-(D\cap D')) $$
Note that $W(-D')_{D}\subset W_{D}(-(D\cap D'))$, hence
$$ c_D\leq \codim {W_D}_{|D'}+\codim W(-D')_{D} $$
By induction on the degre  and the dimension of the ambient variety, we may assume that the  theorem holds true for
$W_D$ and $W(-D)$;  note that Lemmas \ref{L1} and \ref{L2} provide the induction base.

Applying the theorem to $W_{D}$ and $W(-D)$ we get 
\begin{itemize}
\item $(c_{D})_{|D'}\leq (c_D)_{<n>}=\binom{l_n-1}{n}+\dots +\binom{l_{\delta}-1}{\delta}$ 
\item $(c-c_D)_{|D'}\leq (c-c_{D})_{<n-1>} $
\end{itemize}
Adding the two inequalities and keeping in mind that $D'\sim D$ we have
$$c_{D'}=c_{D}\leq (c_D)_{<n>}+ (c-c_{D})_{<n-1>},$$
and by \eqref{3} and property (A)
$$(c-c_D)_{<n-1>}< \binom{l_n-1}{n-1}+\dots +\binom{l_{\delta}-1}{\delta-1},$$
so that
$$c_{D}< \binom{l_n-1}{n}+\dots +\binom{l_{\delta}-1}{\delta}+\binom{l_n-1}{n-1}+\dots +\binom{l_{\delta}-1}{\delta-1}=c_{D}$$
which is a contradiction.
\end{proof}

Let $W\subset H^0(\cO_{\Pj_{\Sigma}}(\beta))$ be  a subsystem and let $k_n,k_{n-1},\dots k_{\delta}$ be the 
 Macaulay coefficients of its codimension  $c$; 
  let $W_1$ be the image of the multiplication map $W\otimes H^0(\cO_{\Pj_{\Sigma}}(\eta))\to  H^0(\cO_{\Pj_{\Sigma}}(\beta+\eta)))$, and $c_1$    the codimension of its image. Let us denote 
$$c^{<n>}:=\binom{k_n+1}{n+1}+\dots +\binom{k_\delta+1}{\delta+1}.$$
 This   has the following elementary properties.
\begin{itemize}
\item If $c'\leq c$ then $c'^{<n>}\leq c^{<n>}$, i.e, the map $c\mapsto c^{<n>}$ is non-decreasing;
\item $(c+1)^{<n>}=\left\lbrace\begin{array}{lc}
c^{<n>}+k_1+1& if\; \delta=1\\
c^{<n>}+1& if\; \delta>1 ;
\end{array}\right.$
\item $c^{<n>}+c'^{<n>}\leq (c+c')^{<n>}$;
\item $c^{<n+1>}\leq c^{<n>}$.
\end{itemize}

The first two properties are clear and for the last two see Lemmas 2 and 3 in \cite{Otwinowska2003}.

\begin{thm}[Toric Macaulay's theorem]\label{ToricGreen} $c_1\leq c^{<n>}$
\end{thm}

\begin{proof}
Let $l_{n+1},l_n,\dots l_{\delta}$ be the $(n+1)$-th  Macaulay coefficients of $c_1$;  then 
$$ (c_{1})_{D}\leq c_{<n>}=\binom{l_{n+1}-1}{n+1}+\dots +\binom{l_{\delta}-1}{\delta} $$
and by the sequence obtained by restriction it follows that 
$$c_1\leq c+(c_1)_{D} $$
so that
$$\binom{l_{n+1}-1}{n}+\dots +\binom{l_{\delta}-1}{\delta-1}\leq c$$
and then
$$\binom{l_{n+1}}{n+1}+\dots +\binom{l_{\delta}}{\delta}=c_1\leq c^{<n>}.$$

\end{proof}

\section{Hypersurfaces in toric varieties and Cox-Gorenstein ideals}
In this section we introduce the notion of Gorenstein ideal for Cox rings.

\subsection{Cox-Gorenstein ideals}
The Cox ring $S$ of a complete simplicial toric variety $\Pj_\Sigma$  is graded over the effective classes in the class group $\operatorname{Cl}(\Pj_{\Sigma})$ 
 $$S = \sum_{\alpha \in \operatorname{Cl}(\Pj^{2k+1}_{\Sigma})} S^\alpha, \qquad
S^\alpha =  H^0(\Pj_\Sigma,\cO_{\Pj_\Sigma}(\alpha))$$
 (see e.g.~\cite{Cox1993}).  After \cite{BruzzoGondimHolandaMontoya}, 
 we   give a definition of {\em Cox-Gorenstein ideal} which generalizes to toric varieties the definition
 as given by Otwinowska in \cite{Otwinowska2003} for projective spaces.

 \begin{dfn} \label{lambdator} 
 A graded ideal $I$ of $S$ 
 is said to be
 a Cox-Gorenstein ideal of socle degree  $N\in\operatorname{Cl}(\Pj^{2k+1}_{\Sigma})$  if
 \begin{enumerate} 
 \item the quotient $R=S/I$ is Artinian; 
\item $\dim_\C R^N = 1$; 
\item for every homogeneous  class $\alpha\in\operatorname{Cl}(\Pj_{\Sigma})$, either 
the natural bilinear morphism (called ``Poincar\'e duality'') \begin{equation}\label{pair}R^\alpha \times R^{N-\alpha } \to R^N\simeq \C \end{equation} is nondegenerate,
or $R^\alpha=R^{N-\alpha}=0$.  \end{enumerate} 
\end{dfn}

\begin{prop} 
 An artinian graded ideal $I$ of $S$ 
 is  
  Cox-Gorenstein   of socle degree  $N $  
  if and only there exists a linear functional $\Lambda \in (S^N)^\vee$ such that for every $\alpha \in \operatorname{Cl}(\Pj_{\Sigma})$
\begin{equation}\label{Lambda} I^\alpha = \{ P\in S^\alpha \,\vert\, \Lambda(PQ) =0 \ \text{for all} \ Q\in S^{N-\alpha}\}.\end{equation}
\end{prop}

\begin{proof} If $I$ is Cox-Gorenstein, the functional $\Lambda\in (S^N)^\vee$ defined by the composition $S^N \to R^N \to \C$ satisfies condition \eqref{Lambda}. Indeed, \eqref{Lambda} certainly holds if $P\in I^\alpha$. On other hand, if \eqref{Lambda} holds for some $P\in S^\alpha$ then it implies $u\cdot v =0$
for all $v\in R^{N-\alpha}$ and $u=\bar P$, so that $u=0$ and $P\in I^\alpha$.

Vice versa, such a $\Lambda$
defines a pairing $\tilde\Lambda\colon R^\alpha\times R^{N-\alpha}\to \C$. If $u\in R^\alpha$ is such that $\tilde\Lambda(u,v)=0$ for all $v\in R^{N-\alpha}$, then $\Lambda(P,Q)=0$ for all $Q\in S^{N-\alpha}$
if $\bar P =u$, so that $P\in I^\alpha$, i.e., $u=0$, and $\tilde\Lambda$ is nondegenerate. \end{proof}

\begin{prop}\label{lambda}Let $I'\subset I$ be  Cox-Gorenstein ideals in $S$  with socle degree $N'$ and $N$ respectively. Let $\Lambda'$, $\Lambda$ be the functionals associated with $I'$ and $I$. Then there exists $F\in S^{N'-N}$ such that $\Lambda(Q)=\Lambda'(QF)$ for all $Q\in S^N$.
\label{propACG}
\end{prop}

\begin{proof} Dualizing the exact sequence 
$$ 0 \to I'^{N} \to S^{N} \to S^{N}/I'^{N} \to 0$$
we obtain
$$ 0 \to (S^{N}/I'^{N})^\vee \to  (S^{N})^\vee \to  (I'^{N})^{\vee} \to 0.$$
 Since $\Lambda$ is zero on $ I'^{N}$ so it lies in $(S^{N}/I'^{N})^\vee$. The latter is isomorphic to $S^{N'-N}/I^{N'-N}$ via $\Lambda'$, so that there is a polynomial $F\in S^{N'-N}$ satisfying the required condition.
\end{proof}

\begin{prop}\label{F}  Let $I'$ be a Cox-Gorenstein ideal  of $S$ with socle degree $N'$;   for any  $ F\notin I'$ with $\deg(F)=N'-N \ge 0 $ the ideal  $(I':F)$ is a Cox-Gorenstein ideal of socle degree $N$.
    \end{prop}
    \begin{proof} If $\Lambda'\in (S^{N'})^\vee$ is the functional associated to $I'$, define $\Lambda\in (S^N)^\vee$ as $\Lambda(Q) = \Lambda'(QF)$. It is clear that $\Lambda(PQ)=0$ if
    $P\in I^\alpha$ and $Q\in S^{N-\alpha}$. On the other hand, if
    $P\in S^\alpha$ and $\Lambda(PQ)=\Lambda'(PFQ)=0$ for all $Q\in S^{N-\alpha}$ then $PF \in (I')^{N'-N+\alpha}$ so that $P\in I^\alpha$.
    \end{proof}

\subsection{Regular and biregular classes} It is quite  common to  meet pairs of ideals $I' \subset I$ of  $S$, where    $I$ is Cox-Gorenstein but $I'$ is not,  although $I'$ is associated with  a linear functional $\Lambda':S^{N'}\to \C$. Thus it is natural to ask for which degrees $I'$ behaves as a Cox-Gorenstein ideal, i.e., for which degrees $\alpha\in \operatorname{Cl}(\Pj^{2k+1}_{\Sigma})$,  $\Lambda'$ defines an inclusion, surjection or bijection of $(S/I')^{\alpha }$ with $\left((S/I')^{N'-\alpha}\right)^{\vee}$. 

 \begin{dfn}
Let $I'$ be an ideal in the Cox ring $S$ of $\Pj_{\Sigma}^{2k+1}$, 
and $\Lambda'\in (S^{N'})^\vee$   a linear functional   that  induces an isomorphism $(S/I')^{N'}\simeq \C$.
We say that $\alpha\in \operatorname{Cl}(\Pj^{2k+1}_\Sigma)$ is regular for $I'$ if $\Lambda'$ defines an inclusion $(S/I')^{\alpha} \hookrightarrow \left((S/I')^{N'-\alpha}\right)^{\vee}$ and we say that $\alpha$ is biregular for $I'$ if $\Lambda'$ defines an isomorphism or equivalently, 
$$I'^{\alpha}=\{ g\in S^{\alpha}\mid \Lambda'(gh)=0\, \text{for all}\ h\in S^{N'-\alpha} \}.$$ 
 \end{dfn}
 So, the condition that  every class is biregular for $I$  is equivalent to $I$ being  Cox-Gorenstein. 

\begin{rmk} 

If $I'=(f_0,\dots,f_r)$ with $\deg(f_i)$ big and nef,  and $r$ the dimension of the ambient toric variety,  then there exists $\Lambda'\in (S^{N'})^{\vee}$ where $N'=\sum_{i=0}^r \deg(f_i)-\beta_0 $, see Corollary 2.5 and Remark 2.6 in \cite{CoxDickenstein2005}.
    \end{rmk}

\begin{cor}\label{biregular} Let $I$ be a Cox-Gorenstein ideal with socle degree $N$  and let  $I'\subset I$ be an ideal  with  a linear functional  $\Lambda'\in (S^{N'})^\vee$    that  induces an isomorphism $(S/I')^{N'}\simeq \C$. If $N'-N\in \operatorname{Cl}(\Pj^{2k+1}_{\Sigma})$ is biregular for $I'$, then for any other $\alpha\in \operatorname{Cl}(\Pj^{2k+1}_{\Sigma})$  biregular for $I'$, one  has $I^{\alpha}=(I':F)^{\alpha}$.
\end{cor}

The following theorems provide a way to check, at least in some cases, if a class $\alpha\in \operatorname{Cl}(\Pj^{2k+1}_{\Sigma})$ is regular.

 \begin{thm}\label{regularthm}
Let $f_0,\dots, f_r$ be homogeneous polynomials, with $\deg(f_i)=\alpha_i$ ample classes,  that  do not vanish simultaneously. Then there exists $\Lambda'\in (S^{N'})^{\vee}$, with $N'=\sum_{i=0}^r \alpha_i-\beta_0$,  such that  every class $\alpha\in \operatorname{Cl}(\Pj^{2k+1}_{\Sigma})$  with 
$$H^q\left(\cO_{\Pj_{\Sigma}}(\alpha-\sum_{j=1}^{q+1}\alpha_{i_j} )\right) =0$$
for all $q=1,\dots r-1$  and $0\leq i_1<\cdots <i_{q+1}\leq r$,  is regular.
    \end{thm}

\begin{proof} It follows from Theorem 1.3 in \cite{Villaflor_Loyola_2024}.
    \end{proof}

\begin{thm} \cite[Thm.~6.3]{BruzzoGondimHolandaMontoya} 
Let $f_0,\dots, f_r$ be homogeneous polynomials, with $\deg(f_i)=\alpha_i$ nef and big classes, that  do not vanishing simultaneously.  If the Picard number  of $\Pj_{\Sigma}$ is 1 then each class $\alpha\in \operatorname{Cl}(\Pj^{2k+1}_{\Sigma})$ is biregular.
    \end{thm}

Examples of  ideals  that  are not Cox-Gorenstein     but   have some biregular classes  may be given in terms of {\em toric Jacobian ideals.}
For every ray $\rho\in\Sigma(1)$  denote by $v_\rho$ its minimal rational generator,
and by $x_\rho$ the corresponding variable in the Cox ring.  Let  $r =\dim\Pj^r_\Sigma$, and   denote
by $s=\#\Sigma(1)$ the number of rays. 
Given $f\in S^\beta$ one defines its {\em toric Jacobian ideal} as 

\begin{equation}\label{ToricJacobianEq}
J_0(f) = \left( x_{\rho_1} \frac{\partial f }{\partial  x_{\rho_1}}, \dots,  x_{\rho_s} \frac{\partial f }{\partial  x_{\rho_s}} \right).    
\end{equation}

We recall from \cite{BatyrevCox} the   definition of nondegenerate hypersurface and some properties (Def.~4.13 and Prop.~4.15).

\begin{dfn} Let $f\in S^\beta$, with $\beta$ an ample Cartier class. The associated hypersurface $X_f \subset \Pj^r_\Sigma$ is nondegenerate if for all $\sigma\in\Sigma$ the affine hypersurface $X_f\cap O(\sigma)$ is a smooth codimension one subvariety of the orbit $O(\sigma)$ of the action of the torus
$\mathbb T^r$. 
\end{dfn}

\begin{prop} \begin{enumerate}  \item Every nondegenerate hypersurface is quasi-smooth.
\item If $f$ is generic then $X_f$ is nondegenerate.
\end{enumerate} 
\end{prop}

We collect here, with some changes in the terminology, some results that are already 
 contained in Prop.~5.3 of \cite{CATTANI_COX_DICKENSTEIN_1997}.
 
\begin{prop}\label{ToricJacobian} Let $f\in S^\beta$, and let $\{\rho_1,\dots,\rho_r\}\subset\Sigma(1)$ be such that
$v_{\rho_1},\dots,v_{\rho_r}$ are linearly independent.  \begin{enumerate}  \item The toric Jacobian ideal of $f$ coincides with the ideal 
$$ \left( f, x_{\rho_1} \frac{\partial f }{\partial  x_{\rho_1}}, \dots,  x_{\rho_r} \frac{\partial f }{\partial  x_{\rho_r}} \right).$$
\item The following conditions are equivalent:
\begin{enumerate} \item $f$ is nondegenerate; \item the polynomials $x_{\rho_i} \frac{\partial f }{\partial  x_{\rho_i}}$, $i=1,\dots,s$, do not vanish simultaneously on $X_f$; \item
the polynomials  $f$ and $x_{\rho_i} \frac{\partial f }{\partial  x_{\rho_i}}$, $i=1,\dots,r$, do not vanish simultaneously on $X_f$.
\end{enumerate} \end{enumerate} 

\end{prop}

\begin{prop} Each class of the form ${s\beta+t\beta_0}$ is biregular for $J_0(f)$  for any $s\in \Z$ and  $t\in \{-1,0,1,2\}$.
\end{prop}  

\begin{proof}
It is a direct consequence of  Proposition 3.3 in \cite{Villaflor_Loyola_2024}.
\end{proof}

\bigskip
\section{Some  applications of toric Macaulay's theorem}

In this section we prove some applications of  Green's theorem to Cox-Gorenstein ideals.
This generalizes some of the results by Otwinowska in \cite{Otwinowka2002,Otwinowska2003}  to the more general setting
of odd-dimensional toric varieties, as opposed to odd-dimensional projective spaces, which is the case considered by her. Firstly we introduce some preliminaries related with the $\phi$-degree of a subvariety $V\subset \Pj_{\Sigma}$ in a $r$-dimensional toric variety.

\subsection{$\phi$-degree of a subvariety for $\mathbf{\phi\in \operatorname{Hom}_{\Z}^+(\Cl(\Sigma),\Q)}$}\label{phidegree}

Let $\Pj^{k+1}_{\Sigma'}\subset \Pj_{\Sigma}
$ be a toric subvariety of $\Pj_{\Sigma}$,
obtained by setting to zero a number of variables in the Cox ring of $\Pj_{\Sigma}$;
this corresponds to picking up a number of rays of
$\Sigma$, and projecting the fan $\Sigma$ to the quotient by the subspace generated by those rays.  Let $S_P=\C[x_1,\dots, x_t]\subset S$ be the Cox ring of $\Pj^{k+1}_{\Sigma'}$;
if  $Z\subset \Pj_{\Sigma}$ is an irreducible $k$-dimensional subvariety, one can arrange that $P=\{x_1=\dots=x_{t}=0\}$ does not intersect $Z$. Then we have a well defined projection map 
$$\pi:Z\to \Pj^{k+1}_{\Sigma'},$$
and by the  fiber dimension theorem and since $Z$ is irreducible, $\dim Z=\dim \pi(Z)$. The following Lemma gives the ideal associated to $\pi(Z)$.

\begin{lma} The variety $\pi(Z)$ is defined by the elimination ideal $I_Z\cap S_P$, i.e., 
$\pi(Z)=\mathbb{V}(I_Z \cap S_P). $
\end{lma}
\begin{proof} By $\S$ 2.2, Theorem 3  in \cite{CoxLittleOshea} we have the result for the affine case and by the toric ideal correspondence, Proposition 5.2.4 in \cite{CoxLittleSchenck}, we get the result (about the elimination ideal, see also \cite{Sturm-elim}).
    \end{proof}

 By the previous Lemma $I_Z\cap S_P\subset S_P$ is a principal ideal $(f_{Z,P})$;
the corresponding subscheme $D_{Z,P}$  in $\Pj_{\Sigma}$ is 
called a \emph{cone}  with base $Z$ and vertex $P$.
 
 In Definition \ref{linear section} we introduced the notion of a linear section.  
We call a subvariety $P \subset \Pj_{\Sigma}$ {\it linear} if is it 
defined by the intersection of linear sections.
By Proposition \ref{lineartoric} applied inductively, there are only finitely many linear subvarieties (up to isomorphism).

\begin{dfn}Let $Z\subset \Pj_{\Sigma}$ be an irreducible $k$-dimensional subvariety and let $\phi\in \operatorname{Hom}_{\Z}^+(\operatorname{Cl}(\Sigma),\Q)$. We define the $\phi$-degree of $Z$ as
$$\deg_{\phi}(Z):=\max\{\phi([D_{Z,P}])\mid P\cap Z=\emptyset, \, P\, \text{linear}\},$$
and we denote by $\delta(Z):= \{[D_{Z,P}]
\mid P\cap Z=\emptyset, \, P\, \text{linear}\}$.

\end{dfn}
\begin{rmk}
When $\Pj_{\Sigma}=\Pj^r$, we recover the classical definition of degree  by taking $\phi$ the identity map and $P$ general. If $Z$ is reducible we define $deg_{\phi}(Z)$ as the maximum of the $\phi$-degrees of its irreducible components.
    
\end{rmk}

\begin{ex} Let $\Pj_{\Sigma}$ be the toric variety $\Pj^2\times \Pj^1$ with Cox ring $\C[x_1,x_2,x_3,x_4,x_5]$ and let $Z$ be an irreducible curve not containing $P=\{x_1=x_2=x_4=0\}$. Taking $\phi:\Z^2\to \Z$, $(a,b)\mapsto a+b$, we have 
$\deg_{\phi}Z=Z \cdot (\pi_1^*(H_1)+\pi_2^*(H_2)).  $
where $\pi_1, \pi_2$ are the natural projections and $H_1$ and $H_2$ the hyperplane sections of $\Pj^2$ and $\Pj^1$ respectively.

\end{ex}

\subsection{Applications}
 Let $\Pj_{\Sigma}$ be  an $r$-dimensional toric variety and  $\beta$   an ample class in it. Let  $W\subsetneq H^0(\cO_{\Pj_{\Sigma}}(\beta))$ be a linear subsystem, with base locus $Z$, and set $k = \dim Z$ and $d = \deg_{\phi}(Z)$.

\begin{lma} Assume that $H^1(\Pj_{\Sigma},I_Z(\beta +m\eta))=0$ for $m\gg 0$, where $\mathcal{I}_Z$ is the  ideal sheaf of $Z$. Then
$$\codim(W):=\dim S^{\beta}/W\geq \binom{n+k+1}{k+1}-\binom{n-d+k+1}{k+1},$$ where $n = \phi(\beta)$.
\end{lma}

\begin{proof}  Since $I_W\subset I_Z$, one has 
$\codim W\geq \codim I_Z^\beta$,  so it is enough to prove that the result holds for $\codim I^\beta_Z$. We will prove that by induction over $n$ and $k$.  {For $n=0$ the result is trivial.}
For $k=0$ and $n>0$ we need to show that $\codim I^\beta_Z\geq d$ if $n\geq d$ and $\codim I^\beta_Z\geq n+1$ if $n<d$. Taking cohomology in the exact sequence $$0\rightarrow \mathcal{I}_Z(\beta +m\eta))\rightarrow \cO_{\Pj_{\Sigma}}(\beta +m\eta))\rightarrow \cO_Z(\beta +m\eta))\rightarrow 0$$ 
we have
$$0\rightarrow H^0(\mathcal{I}_Z(\beta +m\eta)))\rightarrow H^0(\cO_{\Pj_{\Sigma}}(\beta +m\eta)))\rightarrow H^0(\cO_Z(\beta +m\eta)))\rightarrow H^1(\mathcal{I}_Z(\beta +m\eta)))\rightarrow \cdots $$
where by  hypothesis $H^1(\mathcal{I}(\beta +m\eta)))=0$ for $m\gg 0$. Thus 
$$\codim  I_Z^{\beta +m\eta}= 
h^0(\cO_{\Pj_{\Sigma}}(\beta +m\eta)))-h^0(\mathcal{I}_Z(\beta +m\eta)))=h^0(\cO_Z(\beta +m\eta)))=d.$$ 
If $n\geq d$ and assuming $\codim I^\beta_Z < d$ we have  
$$d=\binom{n}{n}+\dots+\binom{n-{(d-1)}}{n-(d-1)}=\underbrace{1+\dots+1}_{d-times},$$
applying the Green's theorem and using the fact  that the map $c\mapsto c^{<n>}$ is increasing, 
$$c_1\leq {c^{<n>}}<d \ \mbox{where} \ c_1=\codim I^{\beta+\eta}_Z;$$
repeating the same argument replacing $c$ with $c_1$ one has 
$$c_2\leq c_1^{<n+1>}\leq (c^{<n>})^{<n+1>}<d\ \mbox{where} \ c_2=\codim I^{\beta+2\eta}_Z, $$  
so that
$$c_m\leq (c^{<n>})^{<n+1>\dots <n+m-1>}<d$$
which implies $c_m\leq d-1$. This is a contradiction as $c_m=d$. Hence $\codim I^{\beta}_Z\geq d$ as we wanted.

Now, if $n<d$ and by contradiction if $\codim I^{\beta}_Z\leq n$ then $c<d$ and applying toric Green's theorem as before one gets $d\leq n$ which is a contradiction.

Now let us assume that the result is true for $n-1$ and $k$,  and $n$ and $k-1$. 

\textbf{Claim:} If  $D$ is general effective divisor of class $\eta$, the multiplication for a nonzero global section $s_D$ of  {$\cO_{\Pj_{\Sigma}}
(D)$}
$$\mu_D:S^{\beta-\eta}/I^{\beta-\eta}_Z\rightarrow S^{\beta}/I^{\beta}_Z$$
is injective. 

Since $D$ is general we may assume that $Z\cap D\neq Z$, i.e., $\mu_D\neq 0$. Now, if $\mu_D(f)=0$ then $f\cdot s_D=0$ and since $s_D\neq 0$ then $f=0$.
We have a well defined surjective restriction map $ S^{\beta}/I^{\beta}_Z\xrightarrow{r} S^{\beta}/I^{\beta}_{Z\cap D}$.  There is a short exact sequence 
$$0\rightarrow \ker r \xrightarrow{\mu_D} S^\beta/I^\beta_Z\xrightarrow{r} S^\beta/I^\beta_{Z\cap D}\rightarrow 0  .$$

It is clear that $\ker r$ contains $S^{\beta-\eta}/I^{\beta-\eta}_Z$. By the induction hypothesis, we have  

\begin{equation}\label{8}
\codim I^{\beta-\eta}_Z\geq \binom{n+k}{k+1}-\binom{n-d+k}{k+1}
\end{equation}
and
\begin{equation}\label{9}
\codim I^{\beta}_{Z\cap D}\geq \binom{n+k}{k}-\binom{n-d+k}{k};
\end{equation}
thus adding \eqref{8} and \eqref{9}
we get the result.
\end{proof}

\begin{cor}Let $W\subset H^0(\cO_{\Pj_{\Sigma}}(\beta))$ be a  linear subsystem whose base locus  $Z$ has dimension $k$ and is such that $\deg_{\phi}(Z) \ge  d$. Then for every $x\leq \min\{d,n\}$ one has
$$x\frac{(n-x)^k}{k!}\leq \codim W. $$
\end{cor}

\begin{proof}
Since $\binom{n+k+1}{k+1}-\binom{n-d+k+1}{k+1}= \sum_{j=1}^d\binom{k+1+n-j}{n-j+1}$ by applying the above lemma we get
\begin{multline} \sum_{j=1}^d\binom{k+1+n-j}{n-j+1}\geq \sum_{j=1}^d \frac{(k+1+n-j)\dots (k-(k-1)+1+n-j)}{k!} \\ \geq \sum_{j=1}^d\frac{(n-j)^k}{k!} 
\geq x\frac{(n-x)^k}{k!} \end{multline}
\end{proof}
We establish a pre-order in $N^1(\Pj_{\Sigma}) = \Pic(\Pj_{\Sigma})\otimes \mathbb Q$ by letting
$M \le N$ when $N-M$ is  effective.

\begin{prop}\label{majorante} For every $\epsilon_1>0$ there exists $\delta_1 >0$ such that for every $n\geq \frac{1}{\delta_1}$ and every real number $d\in[1,\delta_1n]$, if a Cox-Gorenstein ideal $I$ of socle degree $N$ satisfies 

\begin{itemize}\itemsep-2pt
\item $ 2\beta-\beta_0 \leq N$,  where  $\phi(\beta)=n$, 
\item $\codim I^{\beta}\leq d\frac{n^k}{k!}$,
\end{itemize}

then
\begin{enumerate}\itemsep-2pt
\item For every integer $i\in \{0,\dots , \lfloor \delta_1 n\rfloor\}$, one has
$$\codim I^{\beta-i\eta}\leq (1+\epsilon_1)d\frac{n^k}{k!} .$$
\item For every $i\in\{0,\dots n\}$, one has
$$\codim I^{\beta-i\eta}\leq 4^k d \frac{n^k}{k!}. $$
\end{enumerate}
\end{prop}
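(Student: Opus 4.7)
The plan is to use the Cox-Gorenstein duality to reformulate the codimensions $\codim I^{\beta-i\eta}$ as codimensions $\codim I^{(n+i)\eta}$, then to iterate the generalized Macaulay Theorem \ref{mt} to propagate the hypothesis bound upward in degree, and finally to extract both inequalities from a straightforward estimate on binomial coefficients.

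Since $(n+i)\eta$ is ample for every $i\geq 0$, applying the dual isomorphism $B_\Sigma^\alpha/I^\alpha\cong(B_\Sigma^{N-\alpha}/I^{N-\alpha})^\vee$ (valid whenever $N-\alpha$ is ample) with $\alpha=\beta-i\eta$ yields the equality $\codim I^{\beta-i\eta}=\codim I^{(n+i)\eta}$. Writing $c_i:=\codim I^{(n+i)\eta}$, the hypothesis reads $c_0\leq d\,m^k/k!$, and it suffices to bound each $c_i$. Because $I$ is an ideal of $S$, the multiplication map carries $I^{s\eta}\otimes S^\eta$ into $I^{(s+1)\eta}$; applying Theorem \ref{mt} to $W=I^{s\eta}\subset H^0(\cO_{\Pj_\Sigma}(s\eta))$ gives $c_{i+1}\leq c_i^{<n+i>}$, and iterating,
\[ c_i\leq\big(\cdots(c_0^{<n>})^{<n+1>}\cdots\big)^{<n+i-1>}. \]

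The crux is to bound this iterated expression. By monotonicity (property (A)) I would replace $c_0$ by a single-binomial majorant: let $a$ be the least positive integer with $\binom{a+n-1}{n}\geq c_0$. Since the $n$-th Macaulay decomposition of $\binom{a+n-1}{n}$ consists of the single term $\binom{a+n-1}{n}$, one has $\binom{a+n-1}{n}^{<n>}=\binom{a+n}{n+1}$, so induction on $i$ combined with (A) yields $c_i\leq\binom{a+n+i-1}{n+i}$, whence
\[ \frac{c_i}{\binom{a+n-1}{n}}\leq\prod_{j=0}^{i-1}\frac{a+n+j}{n+1+j}=\prod_{j=0}^{i-1}\Big(1+\frac{a-1}{n+1+j}\Big). \]
The relation $k\beta=n\eta+\beta_0$ together with $m\eta\leq\beta$ forces $n\geq km-O(1)$. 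A short asymptotic comparison then shows that, for $m\geq 1/\delta_1$ with $\delta_1$ sufficiently small, the bound $c_0\leq\delta_1\,m^{k+1}/k!$ is attained with $a\leq k+2$. Hence each factor in the product is $1+O(1/m)$, and for $i\leq\delta_1 m$ the whole product is $1+O(\delta_1)$, while for $i\leq m$ it is at most $((n+m)/n)^{a-1}\leq((k+1)/k)^{k+1}$, which is bounded by $4^k$ for every $k\geq 1$.

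Part (1) then follows by choosing $\delta_1$ small enough that the implicit constant times $\delta_1$ does not exceed $\epsilon_1$, and part (2) follows at once from the coarse estimate. The main obstacle is the combinatorial analysis of the third paragraph: the real-valued bound $c_0\leq d\,m^k/k!$ is not of binomial shape, so one must fix a natural single-binomial majorant and verify a uniform bound $a=O(1)$ depending only on $k$ (and not on $d$ or $m$ within the stated ranges); once this is established, both the iterative use of Theorem \ref{mt} and the final $1+O(\delta_1)$ growth estimate are routine.
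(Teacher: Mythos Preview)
There is a genuine gap, and it lies precisely where you flag the ``main obstacle'': the single-binomial majorant is too coarse to recover the factor $d$ in the conclusion.

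First, a minor point: the hypothesis of the proposition is only $\beta-\beta_0\le n\eta$, which together with $m\eta\le\beta$ gives $n\ge m-O(1)$, not $n\ge km-O(1)$. The relation $k\beta-\beta_0=n\eta$ you invoke is a standing assumption elsewhere in the paper but is \emph{not} a hypothesis of this proposition; the paper's own proof uses only $n\ge m-r$ with $r$ a fixed constant.

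The essential problem, however, persists even if one grants $n\gtrsim m$ (or even $n\gtrsim km$). You bound $c_i\le\binom{a+n+i-1}{n+i}$ and then control the ratio $c_i/\binom{a+n-1}{n}$ by a product that is $1+O(\delta_1)$ for $i\le\delta_1 m$. But the quantity you need to bound is $c_i/(d\,m^k/k!)$, and your argument tacitly assumes $\binom{a+n-1}{n}\approx d\,m^k/k!$. This fails for most values of $d$. Concretely, take $k\ge 1$, $n\approx m$, and $d=2$. Then $\binom{n+k}{k}\approx m^k/k!$ is too small to majorize $c_0\le 2m^k/k!$, so your minimal $a$ is $k+2$, and $\binom{n+k+1}{k+1}\approx m^{k+1}/(k+1)!$, larger than $d\,m^k/k!$ by a factor of order $m$. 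Your bound then yields only $c_i\lesssim m^{k+1}/(k+1)!$, which is useless for proving $c_i\le(1+\epsilon_1)\,2\,m^k/k!$. The same overshoot wrecks part~(2). In short, ``$a=O(1)$'' is true but irrelevant: the gap between consecutive single binomials $\binom{n+k}{k}$ and $\binom{n+k+1}{k+1}$ is a factor of $n/(k+1)$, so a single-term majorant cannot track $d$ on the whole range $[1,\delta_1 m]$.

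The paper circumvents this by choosing a \emph{multi-term} majorant: it first uses that $c\mapsto c^{<x>}$ is increasing and $x\mapsto c^{<x>}$ is decreasing to replace the exponent $n$ by a smaller $x=\lfloor m/(2+\gamma)\rfloor+(2+\gamma)^kd$, and then dominates $d\,m^k/k!$ by a sum of $(2+\gamma)^kd$ consecutive binomials, each of size roughly $m^k/k!$. Under the iterated Macaulay operator such a sum grows by the factor $(1+O((d+i)/m))^k$, which is exactly what gives the $(1+\epsilon_1)$ and $4^k$ constants. The key idea you are missing is that the number of terms in the majorant must scale with $d$.
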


\begin{proof}
By   toric Green's theorem applied $i$ times one has

$$\codim I^{\beta+i\eta}\leq (\codim I^{\beta})^{<n> \cdots <n+i-1>} $$
Since $I$ is Cox-Gorenstein then  $\codim I^{\beta}= \codim I^{N-\beta}$ and $\codim I^{\beta-i\eta}= \codim I^{N-\beta+i\eta}$, and  thus
$$\codim I^{\beta-i\eta}\leq (\codim I^{\beta})^{<\phi(N)-n> \cdots <\phi(N)-n+i-1>},$$
and moreover by the properties of the functions $c\mapsto c^{<n>}$ and $n\mapsto c^{<n>}$ one has that for every $x\leq \phi(N)-n $
$$\codim I^{\beta-i\eta}\leq (\codim I^{\beta})^{<x> \cdots <x+i-1>}. $$

 The rest of the proof follows verbatim  the  proof of ``Proposition majorante''  in \cite{Otwinowska2003}.  
\end{proof}
For $I$ satisfying the assumptions of the previous Proposition we have:

\begin{cor} Let $\eta_1,\dots \eta_{\rho}$ be $\phi$-linear class divisors with $\rho={\rk}\operatorname{Cl}(\Pj^{2k+1}_{\Sigma})$. Then for every $i_1\dots i_{\rho}\in \{0,\dots,n\}$ and $\alpha$ of the form  $\beta-\sum_{j=1}^{\rho} i_j\eta_j$,
$$\codim I^{\alpha }\leq 4^{\rho k} d \frac{n^k}{k!}. $$
\end{cor}
 
\begin{dfn} Let $I\subset S$ be an ideal. For $i\in \{0,\dots,r\}$ we define 
$$\ell_i(I):=\{\gamma\in \operatorname{Cl}(\Pj^{2k+1}_{\Sigma})\mid \dim \mathbb{V}(I^{\gamma})\leq r-i-1\};$$
and for $\phi\in {\operatorname{Hom}_\Z^+(\operatorname{Cl}(\Pj^{2k+1}_{\Sigma}),\Q)}$
$$l_i^{\phi}(I):=\min \{\phi (\gamma)\in \N\cup \infty \mid \gamma\in \ell_i\} .$$
\end{dfn}
We let $\dim \emptyset = -1$, and $l_i=\infty$ when this number does not exist.
\begin{rmk}

\begin{itemize} 
\item We shall write $l_i(I)$ instead of $l_i^{\phi}(I)$.
   \item Note that $l_0(I)\leq \dots \leq l_{2k}(I)$.
   \item If $I$ is base point free in $\Pj_{\Sigma}$, then $l_{r}(I)\in \N$.
\end{itemize}
\end{rmk}

\begin{lma}\label{FundamentalLemma} For every $\epsilon_2>0$ there exists $\delta_2 >0$ such that for every $n\geq \frac{1}{\delta_2}$ and  $d\in [1,\delta_2 n]$, if a Cox-Gorenstein ideal $I\subset S$ with socle degree $N$ satisfies :
\begin{itemize}\itemsep=-2pt
\item $ 2\beta-\beta_0 \leq N$,  where $n=\phi(\beta)$,
\item $\codim I^{\beta}\leq d\frac{n^k}{k!}$, 
\item there exists $\gamma\in\ell_i(I)$ of the form $\beta-\sum_{j=1}^{\rho} i_j\eta_j$ such that $\phi(\gamma)=l_i(I)$
\end{itemize}
then 
$$\gamma\leq\epsilon_2 \beta \;\;\forall i\in \{0,\dots , r-k-1\},$$  

\end{lma}

\begin{proof} {Note that  it is enough to prove the Lemma for $i=r-k-1$,} so we apply the previous Proposition for $\epsilon_1=1$, and the Corollary for $x=1$.  Setting  $l=l_i(I)$ then we have 

$$\frac{(l-1)^{k+1}}{(k+1)!}\leq \codim I^{\gamma}\leq 4^{\rho k}d\frac{n^k}{k!} $$
so that
$$l\leq 1+\big(4^{\rho k}dn^k(k+1)\big)^{\frac{1}{k+1}}\leq \big(\frac{1}{n}+(4^{\rho k}(k+1)\frac{d}{n})^{\frac{1}{k+1}}\big)n\leq (\delta_2+(4^{\rho k}(k+1)\delta_2)^{\frac{1}{k+1}})n$$
and since $2\leq 2n\delta_2$ by assumption, adding and subtracting $2\delta_2$  in the right hand of the previous inequality one has
$$l\leq (3\delta_2+(4^{\rho k}(k+1)\delta_2)^{\frac{1}{k+1}})n-2.$$
So, given $\epsilon_2>0$, we take $\delta_2$ small enough to have $3\delta_2+(4^{\rho k}(k+1)\delta_2)^{\frac{1}{k+1}}<\min\{1,\epsilon_2\}$; then $l_i(I)\leq \epsilon_2n$ and  by the form of $\gamma$ and that for each $\pi_i:\operatorname{Cl}(\Pj^{2k+1}_{\Sigma})\to \Q $ the natural projection $\pi_i\leq \phi$, we get the result.
\end{proof}

\begin{rmk} The third condition in the previous Lemma is redundant when the toric variety has Picard rank 1.
    
\end{rmk}

The following Proposition will be the technical core of what follows.  

\begin{prop}\label{MajorProp}
 Let $\Pj_{\Sigma}^{2k+1}$ be an odd dimensional toric variety and let $\eta_1,\dots n_{\rho}$ be $\phi$-linear divisor classes, where $\rho=\rk \operatorname{Cl}(\Pj_{\Sigma}^{2k+1})$. For every $\epsilon>0$ there exists  $\delta>0$ such that for every integer $n>\frac{1}{\delta}$  and for every $d\in[1,\delta n]$, if a Cox-Gorenstein ideal $I\subset  S$  with socle degree $N$ satisfies:
\begin{enumerate}[label=\roman*)]\itemsep=02pt
\item  $N=(k+1)\beta-\beta_0 $ ;
\item  $I$ contains $(2k+2)$-polynomials with empty vanishing  and with the same ample degree $\beta$;
\item there exists $\gamma\in\ell_i(I)$ of the form $\beta-\sum_{j=1}^{\rho} i_j\eta_j$ such that $\phi(\gamma)=l_i(I)$,

\end{enumerate}
then there exists a  closed  subvariety $V\subset \Pj_{\Sigma}^{2k+1}$ of pure dimension $k$ and  $\phi$-degree less than or equal to $(1+\epsilon)d$.  
\end{prop}

\begin{proof}
 By the second and third assumption and the previous Lemma there exist $F_0,F_1,\dots F_{k}\in I^{\leq \gamma}$ with $\gamma\leq \epsilon_2\beta$, and  it is possible to find $(k+1)$-polynomials $F_{k+1},\dots , F_{2k+1}$, where $\deg(F_i)=\beta$ ( $i>j$), so that  the ideal  $(F_0,\dots F_r)\subset I$ is base point free in $\Pj^{2k+1}_{\Sigma}$ and there exists $\Lambda'\in S^{N'}$ with $N'= \sum_{i=0}^{r}\deg(F_i)-\beta_0$, and thus
 $$
N'=\sum_{i=0}^{r}\deg(F_i)-\beta_0\leq (k+1)\gamma+(k+1)\beta-\beta_0. 
$$
Then for any $F\notin(F_0,\dots, F_{2k+1})$ such that $\deg(F)\leq (k+1)\gamma$, we consider $J=\big((F_0,\dots F_{k}):F \big)$  which coincides with $(F_0,\dots, F_{2k+1}:F)$ in degree less than 
$$\beta-(k+1)\gamma\geq (1-(k+1)\epsilon_2)\beta.$$ 

 Now, since $(F_0,\dots, F_{k})$ are in complete intersection, their vanishing defines a subvariety $Y$   of dimension $k$; 
then  $V:=\mathbb{V}(J)$   also has dimension $k$. 

Let $\epsilon\in (0,1)$; we choose $\epsilon_1$ and $\epsilon_2$ such that 
\begin{equation}\label{EpsilonInequality}
    \frac{1+\epsilon_1}{(1-2\epsilon_2(k+1))^k} \leq 1+\epsilon
\end{equation}
Let $\delta_1$ from Proposition \ref{majorante} applied to $\epsilon_2$ and assume $\epsilon_2$ small enough such that for every $n\geq 4/\delta_1$,

$$\epsilon_2(k+1) \beta\leq \delta_1 \beta.$$

Now, let $\delta\leq \{\frac{\delta_1}{4}, \delta_2, \frac{\epsilon_2}{7}\}$ where $\delta_2$ is from Lemma \ref{FundamentalLemma} applied to $\epsilon_2$. 
  Set $\sum_{m=1}^{\rho}{j_m}\eta_m$ ($j_m\in \N^{+}$) the minimum of elements  such that $\sum_{m=1}^{\rho}{j_m}\eta_m\geq (k+1)\epsilon_2\beta$;  setting $l=\phi(\beta-\sum_m{j_m}\eta_m)$, we have for every $x\leq \min\{n,{\rm deg}_{\phi}V\}$ that 
$$x\frac{(l-x)^k}{k!}\leq \codim I^{\beta-\sum_m{j_m}\eta_m}\leq (1+\epsilon_1)d\frac{n^k}{k!}, $$
and thus
\begin{equation}\label{xinequality}
   -x\left(\frac{\lceil\epsilon_2(k+1)n \rceil+x}{n}-1\right)^k\leq (1+\epsilon_1)d, 
\end{equation}
so taking $x=\lceil\epsilon_2(k+1)n \rceil $ and applying inequality \ref{EpsilonInequality} we get $\lceil\epsilon_2(k+1)n \rceil\leq (1+\epsilon)d\leq 2d$ which contradicts the fact that $2d\leq 2 \delta n< \lceil \epsilon_2 n \rceil \leq \lceil \epsilon_2(k+1) n \rceil$. Thus $\deg_{\phi}V \leq \lceil \epsilon_2(k+1) n \rceil $ so we can  apply the inequality $\ref{xinequality}$ for $x=\deg_{\phi} V $  and use the inequality $\ref{EpsilonInequality}$ to get the result. 
\end{proof}

The  final part  of the previous proof implies that if $D_{V,P}$ is a divisor associated with a cone with base $V$, then $\phi([D_{V,p}])\leq \phi(\beta-\sum_m j_m \eta_m)$. Hence without loss of generality we can choose  $\epsilon_2$ small enough so that  
$$[D_{V,P}]\leq \beta-(k+1)\epsilon_2\beta\leq \beta-\sum_{m=1}^{\rho} j_m \eta_m.$$

\begin{cor}\label{FundamentalCor} There are cones  $F_0,\dots F_k$  with base $V$ such that 
whenever  $\alpha\in \operatorname{Cl}(\Pj^{2k+1}_{\Sigma})$ is a class strictly less than  
$\beta-(k+1)([D_{V,P}])$,
then
$(I_V)^{\alpha}$ coincides with $(F_0,\dots, F_{2k+1}:F)^{\alpha}$.

Moreover, if $\sum_i \deg F_i$ is regular for $(F_0,\dots, F_{2k+1})$, $F$ is the polynomial associated to $\Lambda$ and if $\alpha$ is a biregular class  for $(F_0,\dots, F_{2k+1})$, then  $(I_V)^{\alpha}=I^{\alpha}$.     
\end{cor}

\begin{proof}    $V$ is of pure dimension $k$,  and by its construction one can find $(k+1)$-polynomials   ${f}_{V,P_i}$ in complete intersection which define    cones with base $V$ and vertex $P_i$  (see Subsection \ref{phidegree})
  such that  $\deg({f}_{V,P_k})\leq [D_{V,P}]$ , so that  ${f}_{V,P_0},\dots, {f}_{V,P_k}\in I_V$ .  Since   $[D_{V,P}]\leq \beta-(k+1)\epsilon_2\beta$, then  $({f}_{V,P_0},\dots, {f}_{V,P_k}:F)$ coincides with $({f}_{V,P_0},\dots, {f}_{V,P_k},F_{k+1},\dots, F_{2k+1}:F)$ for $\alpha\in \operatorname{Cl}(\Pj^{2k+1}_{\Sigma})$ less than or equal to $\beta-(k+1)([D_{V,P}])$. Then we take $F_i= {f}_{V,P_i}$ for $0\leq i\leq k$ and by  Corollary \ref{biregular}, $(I_V)^{\alpha}=I^{\alpha}$. 
\end{proof}

\bigskip
\section{The tangent spaces to the Noether-Lefschetz loci}

By \cite{MorihikoSaito, WangZaffran2009}, $\Pj^{2k+1}_{\Sigma}$ has a pure Hodge structure,  there is a well defined residue map for it, and we can use it to construct the tangent space at a point of the Noether-Lefschetz locus. This is again basically done as in \cite{Otwinowska2003}, however we provide more details, and use the 
properties of the residue map as developed in \cite{BatyrevCox} for simplicial toric varieties.

Let $X=\{f=0\}$ be a quasi-smooth hypersurface in $\Pj_{\Sigma}$, with $\deg f =\beta$. Denote by $i:X\hookrightarrow \Pj_{\Sigma}$  the inclusion, and by $i^*:H^{\bullet}(\Pj_{\Sigma}^{2k+1},\Q)\rightarrow H^{\bullet}(X,\Q)$ the associated morphism in cohomology; $i^*:H^{2k}(\Pj_{\Sigma}^{2k+1},\Q)\rightarrow H^{2k}(X,\Q)  $ is injective by Lefschetz's hyperplane theorem. 

\begin{dfn} The primitive cohomology group $H^{2k}_{\mbox{\rm\footnotesize prim}}(X,\Q)$ is the quotient $$H^{2k}(X,\Q)/i^*(H^{2k}(\Pj_{\Sigma}^{2k+1}, \Q)$$

\end{dfn}
Both $H^{2k}(\Pj_{\Sigma}^{2k+1}, \Q)$ and $H^{2k}(X,\Q)$ have pure Hodge structures, and the morphism $i^*$ is compatible with them, so that $H^{2k}_{\mbox{\rm\footnotesize prim}}(X)$ inherits a pure Hodge structure.

We  denote by $M$ the dual lattice of the lattice $N$ which contains the fan $\Sigma$, i.e., $\Sigma\subset N\otimes  \R$.  

\begin{dfn}Fix an integer basis  $m_1,\dots m_{2k+1}$ for the lattice $M$. Then given a subset $\iota=\{i_1,\dots, i_{2k+1}\}\subset \{1,\dots,\#\Sigma(1) \} $, where $\#\Sigma(1)$ is the number of rays in $\Sigma$, we define
$$\det(e_{\iota}):=\det\big(<m_j,e_{i_{h}}>_{1\leq j,h\leq 2k+1} \big) ;$$
moreover, $dx_{\iota}=dx_{i_1}\wedge\cdots \wedge dx_{i_{2k+1}} $ and $\hat{x}_{\iota}=\Pi_{i\notin\iota}x_{\iota}$.
\end{dfn}

\begin{dfn} The $(2k+1)-$form $\Omega_0\in \Omega^{2k+1}_S$ is defined as 
$$\Omega_0:=\sum_{|\iota|=2k+1}\det(e_{\iota})\hat{x}_{\iota}dx_{\iota}$$
where the sum is over all subsets $\iota\subset \{1,\dots ,2k+1\}$ with $2k+1$ elements. 
\label{euler}
\end{dfn}
For more details about these definitions  see \cite{BatyrevCox}. We denote by $NL_{\lambda,U}$ the local Noether-Lefschetz locus  as in Definition \ref{localNL}.

\begin{thm}$T_{[f]}\big(NL_{\lambda,U}\big)\cong E^{\beta}$, where 
$$E=\Bigl\{K\in S^{\bullet} \mid  \sum_{i=1}^b \lambda_i \int_{\operatorname{Tub}\gamma_i}  \frac{KR\Omega_0}{f^{k+1}}=0 \;for\;all\; R\in S^{N-\bullet}\Bigr\} ,$$
 $N=(k+1)\beta-\beta_0$ and $\operatorname{Tub}(-)$ is the adjoint of the residue map.
\end{thm}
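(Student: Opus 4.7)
The plan is to adapt to the toric setting the classical Griffiths--Carlson--Voisin description of the tangent space to the Noether--Lefschetz locus, used by Otwinowska for $\Pj^n$ in \cite{o1}, employing throughout the residue calculus for simplicial toric varieties developed in \cite{bc}.

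First I would identify $T_{[f]}\mathcal{U}_\beta$ with $B(\Sigma)^\beta/\C\cdot f$, a tangent vector being represented by the family $f_t=f+tK$. Since $N^{k,\beta}_{\lambda,U}$ is cut out by the vanishing of $\bar\lambda$ and $\lambda$ is flat in $\mathcal{H}^{2k}_\Q$, the tangency condition $\tfrac{d}{dt}\bar\lambda_{f_t}|_{t=0}=0$ in $(\mathcal{H}^{2k}/F^k)_f$ measures only the variation of the Hodge filtration. By Griffiths transversality and the standard infinitesimal variation of Hodge structures argument, this derivative equals (up to sign) the Kodaira--Spencer action $\kappa(K)\cdot\lambda$ projected to $F^{k-1}/F^k\cong H^{k-1,k+1}_{\mathrm{prim}}(X_f)$, so
$$T_{[f]}N^{k,\beta}_{\lambda,U}=\{K\in B(\Sigma)^\beta\mid \kappa(K)\lambda=0\ \text{in}\ H^{k-1,k+1}_{\mathrm{prim}}(X_f)\}.$$

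Next I would invoke the Griffiths residue presentation of primitive Hodge cohomology for quasi-smooth hypersurfaces in $\Pj^{2k+1}_\Sigma$ from \cite{bc}: for each $0\le p\le k$ the map $P\mapsto\operatorname{Res}(P\Omega_0/f^{k+1-p})$ induces an isomorphism
$$(B(\Sigma)/J(f))^{(k+1-p)\beta-\beta_0}\xrightarrow{\ \sim\ }H^{k+p,k-p}_{\mathrm{prim}}(X_f),$$
and under these identifications $\kappa(K)$ becomes multiplication by $K$, raising the pole order by one. Poincaré duality on $X_f$ then gives that $\kappa(K)\lambda=0$ in $H^{k-1,k+1}_{\mathrm{prim}}$ iff $\int_{X_f}\kappa(K)\lambda\cup\mu=0$ for every $\mu\in H^{k+1,k-1}_{\mathrm{prim}}(X_f)$. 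Because the Gauss--Manin connection is flat and preserves the Poincaré pairing, the flatness of $\lambda$ yields the adjointness relation $\int_{X_f}\kappa(K)\lambda\cup\mu=-\int_{X_f}\lambda\cup\kappa(K)\mu$. Representing $\mu=\operatorname{Res}(R\Omega_0/f^k)$ with $R\in S^{k\beta-\beta_0}=S^{N-\beta}$, one has $\kappa(K)\mu=\operatorname{Res}(KR\Omega_0/f^{k+1})\in H^{k,k}_{\mathrm{prim}}$, so the tangency condition becomes $\int_{X_f}\lambda\cup\operatorname{Res}(KR\Omega_0/f^{k+1})=0$ for every such $R$.

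Finally I would convert this cohomological pairing into the stated tube integral. Writing $\lambda=\sum_i\lambda_i[\gamma_i]\in H_{2k}(X_f,\C)$ via Poincaré duality on $X_f$ and applying the residue/tube identity
$$\int_{X_f}\lambda\cup\operatorname{Res}(\omega)=\frac{1}{2\pi i}\sum_i\lambda_i\int_{\operatorname{Tub}\gamma_i}\omega$$
valid for any rational $(2k+1)$-form $\omega$ on $\Pj^{2k+1}_\Sigma$ with poles along $X_f$, the tangency condition reads $\sum_i\lambda_i\int_{\operatorname{Tub}\gamma_i}(KR\Omega_0/f^{k+1})=0$ for all $R\in S^{N-\beta}$, which is precisely $K\in E^\beta$, and this gives the desired identification.

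The hard part is not the overall scheme, which mirrors \cite{o1}, but verifying that each classical ingredient (Griffiths transversality, the residue isomorphism for primitive cohomology, Kodaira--Spencer as multiplication in the Jacobian ring, self-adjointness of $\kappa(K)$ for the Poincaré pairing, and the tube/residue identity) extends to the simplicial toric ambient $\Pj^{2k+1}_\Sigma$, which has at worst orbifold singularities. These ingredients are supplied by the pure Hodge structure on $\Pj^{2k+1}_\Sigma$ (\cite{SaitoKyoto,Zaf09}), by the residue apparatus of \cite{bc}, and by the identification $S^\alpha=B(\Sigma)^\alpha$ in ample degrees; the degree bookkeeping is encoded in the socle degree $N=(k+1)\beta-\beta_0$ of the Cox--Gorenstein ideal $E$, matching the graded piece of the Jacobian ring that realises $H^{k,k}_{\mathrm{prim}}(X_f)$.
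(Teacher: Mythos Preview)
Your proposal is correct and reaches the same conclusion by the same toric ingredients from \cite{bc}, but the organization differs from the paper's. The paper proceeds more directly: it first observes via the residue map that $F^{k+1}H^{2k}_{\mbox{\rm\footnotesize prim}}(X_G)$ is spanned by classes $\res(K\Omega_0/G^k)$ with $K\in S^{N-\beta}$, then rewrites the locus as $NL_{\lambda,U}^{k,\beta}=\psi_{|U}^{-1}(0)$ where $\psi_G\in (B_\Sigma^{N-\beta})^\vee$ is the period functional $K\mapsto \sum_i\lambda_i\int_{\operatorname{Tub}\gamma_i}K\Omega_0/G^k$, and finally differentiates $\psi$ in $G$ at $f$; the derivative of $G\mapsto 1/G^k$ produces the extra pole and the factor $R$ in $KR\Omega_0/f^{k+1}$, giving $E^\beta=\ker d\psi_f$. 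You instead pass through the IVHS formalism (Griffiths transversality, Kodaira--Spencer as multiplication in the Jacobian ring, self-adjointness for the Poincar\'e pairing) and only convert to tube integrals at the end. Your route is more conceptual and makes explicit why the Kodaira--Spencer map appears as multiplication by $K$; the paper's route is shorter and more elementary, as it bypasses the IVHS machinery and just differentiates the defining period equations. Both approaches rest on the same two nontrivial inputs: the residue description $H^0(\Omega^{2k+1}_{\Pj_\Sigma}(kX))=\{K\Omega_0/f^k: K\in S^{k\beta-\beta_0}\}$ and the identity $S^{k\beta-\beta_0}=B_\Sigma^{k\beta-\beta_0}$ for ample degree, both from \cite{bc}.
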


\begin{proof}

By \cite[Prop.~2.10]{BruzzoGrassi2012} the $p$-th residue map 
$$r_p: H^0(\Pj_{\Sigma},\Omega^{2k+1}_{\Pj_{\Sigma}}(2k+1-p)X)\rightarrow H^{p,2k-p}_{\mbox{\rm\footnotesize prim}}(X) \;\;for\;\; 0 \leq p \leq 2k$$
is well defined; it is  surjective and has  kernel 
$H^0(\Pj_{\Sigma}, \Omega^{2k+1}_{\Pj_{\Sigma}}(2k-p)X)+dH^0(\Pj_{\Sigma},\Omega^{2k}_{\Pj_{\Sigma}}(2k-p)X).$
So
$$\res H^0(\Omega^{2k+1}(2k+1)X)=r_{2k}H^0(\Omega^{2k+1}(X))\oplus \cdots\oplus r_0H^0(\Omega^{2k+1}(2k+1)X))$$ 
by definition of $H^0(\Omega^{2k+1}(2k+1)X)$. Or, equivalently,
$$\res  H^0(\Omega^{2k+1}(2k+1)X)=H^{2k,0}_{\mbox{\rm\footnotesize prim}}(X)\oplus \dots \oplus H^{0,2k}_{\mbox{\rm\footnotesize prim}}(X)=H^{2k}_{\mbox{\rm\footnotesize prim}}(X). $$
Similarly
$$ \res  H^0(\Omega^{2k+1}(kX)=F^{k+1}H^{2k}_{\mbox{\rm\footnotesize prim}}(X).$$
On the other hand by \cite[Thm~9.7]{BatyrevCox}  we have  
$$H^0(\Omega_{\Pj_{\Sigma}}^{2k+1}(kX)=\left\{\frac{K\Omega_0}{f^{k}}\mid K\in S^{k\beta-\beta_0}\right\}.$$

Now fixing a basis $\{\gamma_i\}_{i=1}^b $ for $H_{2k}(X,\Q)$ we have that the coordinates of any element in $F^{k+1}H^{2k}_{\mbox{\rm\footnotesize prim}}(X)$ are  

$$\left(\int_{\gamma_1}\res \frac{K\Omega_0}{f^{k}},\dots ,\int_{\gamma_{b}}\res \frac{K\Omega_0}{f^{k}}\right),$$ 
or, equivalently,
$$\left(\int_{\operatorname{Tub}(\gamma_1)}\frac{K\Omega_0}{f^{k}},\dots ,\int_{\operatorname{Tub}(\gamma_{b})}\frac{K\Omega_0}{f^{k}}\right)$$
where $\operatorname{Tub}(\gamma_j)$ is the adjoint to the residue map. 
Now taking  $0\neq \lambda_f \in H^{k,k}(X,\Q)$ one has $\lambda_f\perp F^{k+1}H^{2k}_{\mbox{\rm\footnotesize prim}}(X)$ (see \cite{Voisin1989}) and since the sheaf $\mathcal{H}^{2k}$ is constant on $U$
we have   
$$NL_{\lambda,U}=\{g\in U\mid \lambda_g\in F^{k}H^{2k}_{\mbox{\rm\footnotesize prim}}(X_g) \}= \{g\in U\mid \lambda_f \perp F^{k+1}H^{2k}_{\mbox{\rm\footnotesize prim}}(X_g)\}.$$
More explicitly, if $(\lambda_1,\dots, \lambda_b)$ are the components of $\lambda_f $, one gets 
$$
\lambda_f \perp F^{k+1}H_{\mbox{\rm\footnotesize prim}}^{2k}(X_g) \Leftrightarrow    \sum_{i=1}^b\lambda_i\int_{\operatorname{Tub}\gamma_i} \frac{K\Omega_0}{g^{k}}=0 \; \forall K\in S^{N-\beta}
$$
where $N$ is equal to $(k+1)\beta-\beta_0$. Thus  we can characterize the local Noether-Lefschetz locus in the following way.

Let us consider the differentiable map $\psi$ which assigns to every homogeneous polynomial $g\in S^{\beta}$ a linear map $\Lambda_g\in (S^{N-\beta})^{\vee}$, i.e., 
$\Lambda: S^{\beta} \longrightarrow (S^{N-\beta})^{\vee}$
sends $g$ to 
\begin{eqnarray}
\Lambda_g\colon\  S^{N-\beta} & \to  & \C  \\ K&\mapsto&
\sum_{i}\lambda_i\int_{\operatorname{Tub}(\gamma_i)}\frac{K\Omega_0}{g^{k}};
\end{eqnarray}
then $NL_{\lambda,U}^{}=\psi_{|U}^{-1}(0)$ , hence the tangent space at $f$ is the kernel of $d\psi_{f}$. Now $T_{[f]}U\simeq S^{\beta}$. Thus we can identified canonically $T_{[f]}\big(NL_{\lambda,U}\big)$ with the subspace $E^{\beta}\subset S^{\beta}$, which is the $\beta$-summand of the Cox-Gorenstein ideal
$$E=\{K\in S^{\bullet} \mid \forall R\in S^{N-\bullet},\; \sum_{i=1}^b \lambda_i \int_{\operatorname{Tub}\gamma_i}  \frac{KR\Omega_0}{f^{k+1}}=0 \} $$
whose socle degree is $N=(k+1)\beta-\beta_0$.
\end{proof}

\begin{rmk} Note that $E$ contains the Jacobian ideal $J(f)$, and hence the toric Jacobian ideal $J_0(f)$,
since the tangent space at the point $f$ can be identified with a subspace of $(S/J(f))^{\beta}$ by Proposition 13.7 in \cite{BatyrevCox}. Moreover, by Proposition \ref{ToricJacobian} $J_0(f)$ is generated by $(2k+2)$-polynomials of the same degree $\beta$ when $f$ is general. 
\end{rmk}

We also have the Cox-Gorenstein ideals 
$$E_s:=\{ K\in S^{\bullet} \mid \forall R\in S^{N+s\beta-\bullet},\; \sum_{i=1}^b \lambda_i \int_{\operatorname{Tub}\gamma_i}  \frac{KR\,\Omega_0}{f^{k+s+1}}=0 \}$$
with $s\in \N^{+}$, which have socle degree $N+s\beta$.
For a fixed $s$, the ideal $E_s$ describes the deformation of order $s+1$ of $NL_{\lambda,U}$ in a neighborhood of $f$. Although we need only consider  $E_1$, we prove the following properties for every $s$.

\begin{prop}\label{properties} The Cox-Gorenstein ideals $E_s$ have the following properties:
\begin{enumerate}[label=\roman*)]\itemsep=-2pt
\item $E_s=(E_{s+1}:f)$;
\item If $f$ is a generic point of ${NL}_{\lambda,U}^{red}$, then $(E_s)^2\Theta\subset E_{s+1},$ where $\Theta\subset S^{\beta}$ is the image of the tangent space $T_{f}({NL}_{\lambda,U})^{red}$;
\item $\forall K\in E_s$ and $\forall j\in \{1,\dots, r \}$, $\frac{\partial K}{\partial x_j}f-(k+s+1)K\frac{\partial f}{\partial x_j}\in E_{s+1}$.
\end{enumerate}
\end{prop}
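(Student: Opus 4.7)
The plan is to prove the three parts separately, working directly from the integral characterization of the $E_s$ and the Batyrev--Cox residue calculus recalled in Section~5. Part (i) is an unwinding of definitions: $K\in(E_{s+1}:f)$ is equivalent to $Kf\in E_{s+1}$, which in turn reads $\sum_i\lambda_i\int_{\operatorname{Tub}\gamma_i}\frac{KfR\,\Omega_0}{f^{k+s+2}}=0$ for every $R\in S^{N+(s+1)\beta-\deg(Kf)}$. Cancelling one factor of $f$ and observing that $\deg(Kf)=\deg K+\beta$ turns the constraint on the test degree into $R\in S^{N+s\beta-\deg K}$, which is precisely the defining condition for $K\in E_s$.

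For part (iii) I would start from the elementary identity
\[
\frac{\partial}{\partial x_j}\left(\frac{K}{f^{k+s+1}}\right)=\frac{f\,\partial K/\partial x_j-(k+s+1)K\,\partial f/\partial x_j}{f^{k+s+2}},
\]
whose numerator is exactly the polynomial $\widetilde K$ we wish to put into $E_{s+1}$. Pairing it with a test polynomial $\widetilde R$ of the degree required by $E_{s+1}$, a Leibniz expansion gives
\[
\frac{\widetilde K\,\widetilde R}{f^{k+s+2}}=\frac{\partial}{\partial x_j}\left(\frac{K\widetilde R}{f^{k+s+1}}\right)-\frac{K\,\partial\widetilde R/\partial x_j}{f^{k+s+1}}.
\]
Because $\deg(\partial\widetilde R/\partial x_j)=N+s\beta-\deg K$, the second summand, after multiplication by $\Omega_0$ and integration against $\sum_i\lambda_i$, vanishes directly from $K\in E_s$. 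For the first summand I would invoke the toric residue reduction identities of \cite{bc}, which rewrite $\frac{\partial}{\partial x_j}(K\widetilde R/f^{k+s+1})\,\Omega_0$ as $d$ of a $(2k)$-form plus a correction whose residue lies in the kernel of the pairing with $\lambda$; Stokes' theorem on the closed cycle $\operatorname{Tub}\gamma_i$ then annihilates the exact piece and yields $\widetilde K\in E_{s+1}$.

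Part (ii) requires the infinitesimal variation of Hodge structure. Since $f$ is a generic (hence smooth) point of $NL_{\lambda,U}^{red}$, every $G\in\Theta$ integrates to an analytic arc $f_t=f+tG+O(t^2)$ remaining in $NL_{\lambda,U}^{red}$, along which $\lambda_{f_t}\perp F^{k+1}H^{2k}_{\mathrm{prim}}(X_{f_t})$ persists to all orders. Fix $K_1,K_2\in E_s$ and choose an analytic continuation $K_1(t)\in E_s(f_t)$ with $K_1(0)=K_1$. Expanding the identity $\sum_i\lambda_i(t)\int_{\operatorname{Tub}\gamma_i}\frac{K_1(t)R\,\Omega_0}{f_t^{k+s+1}}=0$ to first order in $t$, the derivative of $f_t^{-(k+s+1)}$ contributes the factor $-(k+s+1)G/f^{k+s+2}$, while Griffiths transversality places the derivative of $\lambda_{f_t}$ in a graded piece of the Hodge filtration that is annihilated via parts (i) and (iii); the derivative of $K_1(t)$ itself lies in $E_s$, giving another harmless term. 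Specialising $R=K_2\widetilde R$ and absorbing every correction by (i) and (iii), what remains is $\sum_i\lambda_i\int_{\operatorname{Tub}\gamma_i}\frac{K_1K_2G\widetilde R\,\Omega_0}{f^{k+s+2}}=0$, that is, $K_1K_2G\in E_{s+1}$. The principal obstacle is the bookkeeping: one must reconcile simultaneously the Taylor expansion of $f_t^{-(k+s+1)}$, the extension $K_1(t)$, and the Griffiths-transversal variation of $\lambda_{f_t}$, and check that every error term already lies in $E_{s+1}$ via (i)--(iii), so that only the quadratic expression $K_1K_2G$ at pole order $k+s+2$ survives.
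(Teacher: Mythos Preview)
Your argument matches the paper's in all three parts: (i) is the same unwinding of definitions, (iii) is the same Leibniz/exact-form computation (the paper writes it as $R\widetilde K = A - B$ with $A=\partial_j(KR)\,f-(k+s+1)KR\,\partial_jf$ exact and $B=fK\,\partial_jR\in fE_s\subset E_{s+1}$ by (i)), and (ii) rests on differentiating the family of pairings along the reduced locus.

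For (ii) the paper streamlines your arc argument: it packages everything as the observation that the bilinear form $\mathcal{Q}(G)\colon(K,R)\mapsto\sum_i\lambda_i\int_{\operatorname{Tub}\gamma_i}\frac{KR\,\Omega_0}{G^{k+s+1}}$ has \emph{constant rank} as $G$ varies near a smooth point of $(NL_{\lambda,U})^{red}$, whence the derivative $d\mathcal{Q}(f)(M)$ vanishes on $\ker\times\ker=E_s\times E_s$ automatically. This absorbs your extension $K_1(t)$ and its derivative in one stroke. Two small corrections to your version: the coefficients $\lambda_i$ are \emph{constants} in the chosen trivialization of the local system over $U$, so there is no $\lambda'(t)$ term and no Griffiths transversality is needed; and the term involving $K_1'(0)$ vanishes not because $K_1'(0)\in E_s$ (which need not hold) but because $K_2\in E_s$ annihilates $K_1'(0)\widetilde R$ as a test polynomial.
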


\begin{proof} 
1. Clear.

2. For every $g\in {NL}_{\lambda,U}$ and for every $\alpha\in \operatorname{Cl}(\Pj^{2k+1}_{\Sigma})$ such that $N+r\beta-\alpha$ is effective, consider the bilinear map 
$$
\begin{array}{cccl}
\mathcal{Q}_{\alpha} (g): & S^{\alpha}\times S^{N+s\beta-\alpha} & \rightarrow & \C \\
&(K,R)&\mapsto & \sum_{i=1}^b\lambda_i \int_{\operatorname{Tub}\gamma_i}\frac{KR\Omega_0}{g^{k+r+1}}
\end{array}
$$
For a fixed $R$, we have    $\ker\mathcal{Q}_{\alpha}(g)=E_s^{\alpha}(g)$, and  for a fixed $K$ we have  $\ker\mathcal{Q}_{\alpha}(g)=E_s(g)^{N+s\beta-\alpha}$, where $E_s(g)$ is the Cox-Gorenstein ideal associated to the class $\lambda_g$. Since $f$ is a quasi-smooth point of $({NL}_{\lambda, U})^{red}$, the map $g\mapsto \mathcal{Q}_{\alpha}(g)$ has constant rank for every $g$ close to $f$. So for each $\vec{v}\in T_{f}({N}_{\lambda, U})^{red}$ associated to $M\in \Theta$ the differential of the bilinear map
$$
\begin{array}{cccl}
d\mathcal{Q}_{\alpha}(f)(\vec{v}): & S^{\alpha}\times S^{N+s\beta-\alpha}& \rightarrow & \C \\
&(K,R)&\mapsto & -(k+s+2)\sum_{i=1}^t\lambda_i \int_{\operatorname{Tub}\gamma_i}\frac{KRM\Omega_0}{f^{k+s+2}}
\end{array}
$$
 is zero on $E^{\alpha}_s\times E^{N+s\beta-\alpha}_s$,  or, in other words,  $E^{\alpha}_sE^{N+s\beta-\alpha}_s\Theta\subset E^{N+(s+1)\beta}_{s+1}$. 

3. Given $K\in E_s$, for every $R\in S^{N+s\beta+\deg(x_i)-\deg(K)}$ we have
$$R\left(\frac{\partial K}{\partial x_i}f-(k+s+1)K\frac{\partial f}{\partial x_i} \right)=\underbrace{\frac{\partial(KR)}{\partial x_i}f-(k+s+1)KR\frac{\partial f}{\partial x_i}}_A-\underbrace{KF\frac{\partial R}{\partial x_i}}_B .$$
Since  $\frac{A\Omega_0}{f^{k+r+2}}$ is an exact form,  then  $A\in E_{s+1}$. By assumption $K\frac{\partial R}{\partial x_j}\in E_s$ so $B\in E_{s+1}$ by the first property.  Thus $R(\frac{\partial K}{\partial x_i}f-(k+s+1)K\frac{\partial f}{\partial x_i})\in E_{s+1}$ and since $R$ is arbitrary and $E_{s+1}$ is Cox-Gorenstein, we get the result.
\end{proof}

\section{Proof of the main theorem}

We start with a useful Lemma; the techniques used in its proof will also be used in the proof of the main result. Let $V$ be as in Corollary \ref{FundamentalCor}, i.e., $I_V=(F_0,\dots, F_k:F)$ where each $F_i$  defines  a cone with base $V$ and  vertex $P_i$ such that $\deg (F_i)\leq \upsilon$, where $\deg_{\phi}V=\phi(\upsilon)$ . 
\begin{lma}  Let $\upsilon\in \operatorname{Cl}(\Pj^{2k+1}_{\Sigma})$ be a class  such that $\phi(\upsilon)=\deg_{\phi} V$;  then for every  $L\in \left( I_V^{\leq \upsilon} \right)^2$ such that $L\in E$ one has  $L\in E_1$.
\end{lma}

\begin{proof}By assumption $L\in E$ and by Proposition \ref{properties} (ii),  $\Theta\subset (E_1:L)$; by assumption ${\rm codim} \Theta \leq d\frac{n^k}{k!}$ and hence ${\rm codim} (E_1:L)^{\beta}  \leq d\frac{n^k}{k!}$ so if $L\notin E_1$, the ideal $(E_1:L)$ satisfies the assumptions of Lemma \ref{FundamentalLemma} for $\epsilon_2=\frac{1}{2(k+1)}$ and $\delta_2=\delta$. Hence  we have,

$$l_i(E_1:L)\leq \frac{n}{2(k+1)} \ \ \text{for all} \ \ i\in\{0,\dots, k\};$$
on the other hand, by Proposition \ref{properties} (iii)  by taking $K\in \C$  one has $x_i\frac{\partial f}{\partial x_i}\in E_1$ and thus
$$l_i(E_1:L)\leq n \ \ \text{for all} \ \ i\in\{k+1,\dots, 2k+1\}.$$
Now, by assumption 
$$\phi( N+\beta -\deg L) \geq \phi(N+\beta)-2\deg_{\phi}V \geq \phi(N)+ (1-4\delta)n, $$
contradicting the fact that $E_1$ contains    a   complete intersection ideal whose generators have  ample classes,  and thus  
$$(k+1)\frac{n}{2(k+1)}+(k+1)n\leq \phi(N)+\frac{n}{2}$$
so taking $\delta<\frac{1}{8}$ we get a contradiction
\end{proof}

 Eventually, we have all the machinery and strategy necessary to prove our main result. 

\begin{thm}\label{MainThm} For every  $\epsilon>0$ there exists $\delta>0$ such that for all $n\geq \frac{1}{\delta}$ and  for all $ d\in [1,n\delta]$, if:
\begin{itemize}
    \item $\sum_{i=0}^k \deg (F_i)$ is a regular class for $(F_0,\dots, F_{2k+1})$ where $F_{k+1},\dots F_{2k+1}$  are $k$-generators of $J_0(f)$; 
    \item   for each  irreducible subvariety $Z$ of $V$, if  $Z'$ is a  subvariety $Z'\subset V$ of smallest dimension  such that $I_V=I_Z\cap I_{Z'}$ where $\phi(\kappa)=\deg_{\phi}(Z)$ and $\phi(\kappa')=\deg_{\phi}(Z')$,  then $\kappa+2\kappa'$ is a biregular class for $(F_0,\dots, F_{2k+1})$;
    \item $\codim NL_{\lambda,U}\leq d\frac{n^k}{k!}$ where $n=\phi(\beta)$,
    
\end{itemize}
then  $f\in NL_{\lambda,U}$,  $X_f$ contains  a $k$-dimensional subvariety whose $\phi$-degree is less than or equal to  $(1+\epsilon)d$.
\end{thm}

\begin{proof} It is enough to prove that $f\in I_V$ and since $V$ has pure dimension $k$ it is enough to prove $f\in \sqrt{I_Z}$ for every irreducible subvariety $Z$ of $V$. Let $Z'$ be the smallest subscheme of $V$ such that $I_V=I_Z\cap I_{Z'}$ and let  $P$ be a linear subvariety disjoint from $Z$ and $Z'$, and  let $D_{Z,P}$ and $D_{Z',P}$  be their corresponding cones with base $Z$ and vertex $P$ and base $Z'$ and vertex $P$ respectively (see Subsection \ref{phidegree}), where $P$ is the same vertex defining the cones associated with $V$. Thus $\phi([D_{Z,P}])\leq \deg_{\phi}(Z)$ and $\phi([D_{Z,P}])\leq \deg_{\phi}(Z)$. Let $K_Z$ and $K_{Z'}$ be the polynomials associated to $D_{Z,P}$ and $D_{Z,P}$, of  degree $\kappa$ and $\kappa'$ respectively. Now,   set $K_P=K_{Z,P}. K_{Z',P}^2$; by the previous Lemma $K\in E$ and since no partial derivative of $K_{Z,P}$  belong to $I_Z$, then $Z$ is generically smooth,    if   $x_i\frac{\partial K}{\partial x_i}\ne 0$ then it does not belong to $I_V$ and since $I_V$ coincides with  $E$ in   degree
$ \kappa+2\kappa'$, the latter 
degree is biregular, so that we have   $K\notin E_1$ by (i) and (iii) in Proposition \ref{properties}. Hence $(E_1:K)$ is a Cox-Gorenstein ideal with socle degree $N+\beta-(\kappa+2\kappa')$.  

Now, since $\kappa+2\kappa'$ is biregular and less than or equal to  $2 \upsilon$ (by construction), then     $I_W^{\kappa}K_P\subset  E$, and   $I_{W}^{\kappa}\subset (E_1:K_P)^{\kappa}$ by the previous Lemma. Hence $(E_1:K_P)$ contains the ideal 
$$J_P:=(f, I_W^{\kappa},x_{k+2}\frac{\partial f}{\partial x_{k+2}},\dots, x_{2k+1}\frac{\partial f}{\partial x_{2k+1}}) $$
satisfying:

\begin{itemize}
    \item $l_i(J_P)\leq \phi(\kappa) $ for $i\in \{0,\dots, k\}$;
    \item $l_i(J_P)\leq n-1 $ for $i\in \{k+1,\dots, 2k\}$;
    \item $l_i(J_P)\leq n$.
\end{itemize}
 Hence $J_P$ contains a complete intersection ideal $I'$
with ample generators,  such that  $\Lambda'\in S^{N'}$ where $N'= (k+1)[D_{Z,P}]+(k+1)\beta-\beta_0\leq N+(k+1)[D_{V,P}]$ and thus
$$\phi(N')\leq \phi(N)+(k+1)\deg_{\phi}V;  $$
on the other hand since $J_P\subset (E_1:K_P)$ and taking $\delta<\frac{1}{2(k+3)}$ and by the fact that $(E_1:K_P)$ is a Cox-Gorenstein ideal with socle degree $ N+\beta-(\kappa+2\kappa') \geq N+\beta-2[D_{V,P}] $, one has 
$$\phi(N)+n-\phi(\kappa+2\kappa')\geq \phi(N)+n-2\deg_{\phi}V;$$
this  implies   that $\delta \geq \frac{1}{2(r-(k+1)+2)}\geq \frac{1}{2(k+3)}$, which contradicts our choice of $\delta$. Thus $f\in I_Z$.
\end{proof}

\begin{cor}  For every positive 
$\epsilon$ there is  positive $ \delta$ such that for every $n\geq \frac{1}{\delta}$ and $d\in [1,n\delta]$,  if $\Pj^{2k+1}_{\Sigma}$ has Picard rank 1 and $\codim NL^{}_{\lambda,U}\leq d\frac{n^k}{k!}$ where $\phi(\beta)=n$,  then  every element of $NL^{}_{\lambda,U}$ contains a $k$-dimensional subvariety whose $\phi$-degree is less than or equal to $(1+\epsilon)d$.
    
\end{cor}

\bigskip

 \end{document}